\documentclass[a4paper,12pt]{amsart}
\usepackage{amsfonts}
\usepackage{amssymb}
\usepackage{ifthen}
\usepackage{graphicx}
\nonstopmode \numberwithin{equation}{section}
\setlength{\textwidth}{15cm} \setlength{\oddsidemargin}{0cm}
\setlength{\evensidemargin}{0cm} \setlength{\footskip}{40pt}
\pagestyle{plain}
\usepackage[usenames]{color}

\newtheorem{theorem}[equation]{Theorem}
\newtheorem{lemma}[equation]{Lemma}
\newtheorem{corollary}[equation]{Corollary}

\numberwithin{equation}{section}
\newtheorem{case}[equation]{Case}

\newtheorem{claim}[equation]{Claim}
\theoremstyle{definition}
\newtheorem{definition}[equation]{Definition}
\newtheorem{remark}[equation]{Remark}

\newtheorem{examples}[equation]{Examples}

\newtheorem{thm}{Theorem}[section]
\newtheorem{lem}{Lemma}[section]
\newtheorem{cor}{Corollary}[section]

\newtheorem{cl}{Claim}[section]
\newtheorem{ca}{Case}[section]
\newtheorem{sca}{Subcase}[section]
\newtheorem{scl}[section]{Subclaim}
\newtheorem{conj}[equation]{Conjecture}

\theoremstyle{definition}
\newtheorem{defn}{Definition}[section]
\newtheorem{op}[equation]{Open Problem}
\newtheorem{ques}[equation]{Question}
\newtheorem{exam}[equation]{Example}

\newcounter {own}
\def\theown {\thesection       .\arabic{own}}

\newenvironment{pf}[1][]{%
 \vskip 3mm
 \noindent
 \ifthenelse{\equal{#1}{}}%
  {{\slshape Proof. }}%
  {{\slshape #1.} }%
 }%
{\qed\bigskip}

\newcounter{alphabet}
\newcounter{tmp}

\makeatletter
\newcommand{\Ref}[1]{\@ifundefined{r@#1}{}{\setcounter{tmp}{\ref{#1}}\Alph{tmp}}}
\makeatother

\newcommand{\IR}{{\mathbb R}}

\newcommand{\diam}{{\operatorname{diam}}}

\newcommand{\dist}{{\operatorname{dist}}}



\def\be{\begin{equation}}
\def\ee{\end{equation}}

\newcommand{\bee}{\begin{enumerate}}
\newcommand{\eee}{\end{enumerate}}

\newcommand{\blem}{\begin{lem}}
\newcommand{\elem}{\end{lem}}
\newcommand{\bthm}{\begin{thm}}
\newcommand{\ethm}{\end{thm}}
\newcommand{\bcor}{\begin{cor}}
\newcommand{\ecor}{\end{cor}}
\newcommand{\beg}{\begin{exam}}
\newcommand{\eeg}{\end{exam}}
\newcommand{\begs}{\begin{examples}}
\newcommand{\eegs}{\end{examples}}
\newcommand{\bdefe}{\begin{defn}}
\newcommand{\edefe}{\end{defn}}
\newcommand{\bprob}{\begin{prob}}
\newcommand{\eprob}{\end{prob}}
\newcommand{\bques}{\begin{ques}}
\newcommand{\eques}{\end{ques}}
\newcommand{\bei}{\begin{itemize}}
\newcommand{\eei}{\end{itemize}}
\newcommand{\bcon}{\begin{conj}}
\newcommand{\econ}{\end{conj}}
\newcommand{\bop}{\begin{op}}
\newcommand{\eop}{\end{op}}

\newcommand{\bca}{\begin{ca}}
\newcommand{\eca}{\end{ca}}
\newcommand{\bsca}{\begin{sca}}
\newcommand{\esca}{\end{sca}}

\newcommand{\bcl}{\begin{cl}}
\newcommand{\ecl}{\end{cl}}

\newcommand{\bscl}{\begin{scl}}
\newcommand{\escl}{\end{scl}}

\newcommand{\bcons}{\begin{conjs}}
\newcommand{\econs}{\end{conjs}}
\newcommand{\bprop}{\begin{propo}}
\newcommand{\eprop}{\end{propo}}
\newcommand{\er}{\end{rem}}
\newcommand{\brs}{\begin{rems}}
\newcommand{\ers}{\end{rems}}
\newcommand{\bo}{\begin{obser}}
\newcommand{\eo}{\end{obser}}
\newcommand{\bos}{\begin{obsers}}
\newcommand{\eos}{\end{obsers}}
\newcommand{\bpf}{\begin{pf}}
\newcommand{\epf}{\end{pf}}
\newcommand{\ba}{\begin{array}}
\newcommand{\ea}{\end{array}}
\newcommand{\beq}{\begin{eqnarray}}
\newcommand{\beqq}{\begin{eqnarray*}}
\newcommand{\eeq}{\end{eqnarray}}
\newcommand{\eeqq}{\end{eqnarray*}}

\newcounter{minutes}\setcounter{minutes}{\time}
\divide\time by 60
\newcounter{hours}\setcounter{hours}{\time}
\multiply\time by 60 \addtocounter{minutes}{-\time}

\begin{document}

\bibliographystyle{amsplain}

\title{Sphericalization with its applications in Gromov hyperbolic spaces}

\author{Qingshan Zhou}
\address{Qingshan Zhou, School of Mathematics and Big Data, Foshan University,  Foshan, Guangdong 528000, People's Republic
of China} \email{qszhou1989@163.com; q476308142@qq.com}

\author{Yaxiang Li${}^{~\mathbf{*}}$}
\address{Yaxiang Li,  Department of Mathematics, Hunan First Normal University, Changsha,
Hunan 410205, People's Republic
of China}
\email{yaxiangli@163.com}

\author{Xining Li}
\address{Xining Li,  Department of Mathematics(Zhuhai), Sun Yat-sen University, Zhuhai 519082, People's Republic
of China} \email{lixining3@mail.sysu.edu.cn}

\def\thefootnote{}
\footnotetext{ \texttt{\tiny File:~\jobname .tex,
          printed: \number\year-\number\month-\number\day,
          \thehours.\ifnum\theminutes<10{0}\fi\theminutes}
} \makeatletter\def\thefootnote{\@arabic\c@footnote}\makeatother

\date{}
\subjclass[2000]{Primary: 30C65, 30F45; Secondary: 30C20} \keywords{
Sphericalization, Gromov hyperbolic space, quasihyperbolic metric, $\varphi$-uniform domain.\\
${}^{\mathbf{*}}$ Corresponding author}

\begin{abstract} In this paper, we study certain applications of sphericalization  in Gromov hyperbolic metric spaces. We first show that the doubling property regarding two classes of metrics on the Gromov boundary of hyperbolic spaces are coincided. Next, we obtain a characterization of unbounded Gromov hyperbolic domains via metric spaces sphericalization. Finally, we investigate the topological equivalence of Gromov hyperbolic $\varphi$-uniform domains between the Gromov boundary and the inner metric boundary.

\end{abstract}

\thanks{The first author was supported by NNSF of	China (No. 11901090), and by Department of Education of Guangdong Province, China (No. 2018KQNCX285). The second author  was supported by  NNSF of China  (Nos. 11601529,  11671127), and the third author was   supported by NNSF of
	China (No. 11701582).}

\maketitle{} \pagestyle{myheadings} \markboth{Sphericalization with its applications in Gromov hyperbolic spaces}{Zhou et al.}

\section{Introduction and main results}\label{sec-1}

The {\it sphericalization} of a locally compact metric space was first introduced by Bonk and Kleiner \cite{BK02} in defining a metric on the one point compactification of an unbounded space. It is a natural generalization of the deformation from the Euclidean distance on $\mathbb{R}^n$ to the chordal distance on $\mathbb{S}^n$. In \cite{WY}, Wang and Yang introduced a chordal distance on $p$-adic numbers which is an ultra-metric. Recently, the authors generalized this notation to ultra-metric spaces and  provided a new proof for a recent work of Heer \cite{Heer} concerning the  quasim\"{o}bius uniformization of Cantor set in \cite{ZLL1}.

Inspired by \cite{BK02}, Balogh and Buckley in \cite{BB} defined  a {\it flattening} transformation on a bounded metric space. It was shown in \cite{BHX} that these two conformal transformations are dual in the sense that if one starts from a bounded metric space, then performs a flattening transformation followed by a sphericalization, then the object space is bilipschitz equivalent to the original space. This duality comes from the idea that the stereographic projection between Euclidean space and the Riemann sphere can be realized as a special case of inversion. Sphericalization and flattening have a lot of applications in the area of analysis on metric spaces and asymptotic geometry, such as \cite{BB,BHX,BuSc,HSX, LS}.

Recently, Wildrick investigated the quasisymmetric parametrization of unbounded $2$-dimensional metric planes by using the sphericalization (named by {a warping process} in \cite{W}). It was shown in \cite{JJ} that two visual geodesic Gromov hyperbolic spaces are roughly quasi-isometric if and only if their Gromov boundaries are quasim\"obius equivalent by virtue of the flattening and sphericalized deformations. In \cite{M}, Mineyev studied the metric conformal structures on the idea boundaries of hyperbolic complexes via sphericalization.

In this paper, we investigate certain applications of sphericalization in Gromov hyperbolic metric spaces.  In \cite{Gr87}, Gromov observed that the essential large scale geometry of classical hyperbolic spaces $\mathbb{H}^n$ is determined by a {\it $\delta$-inequality} concerning quadruples of points, and meanwhile introduced the concept of $\delta$-hyperbolicity for general metric spaces. Since its appearance, the theory of Gromov hyperbolicity has been found numerous applications in the study of geometric group theory and geometric function theory, for instance \cite{BB03,BHK,BrHa,KLM} and the references therein.

We briefly review the theory of Gromov hyperbolic spaces. For a more complete exposition, see \cite{BrHa,BuSc} or Subsection \ref{s-g}. A geodesic metric space $X$ is called Gromov hyperbolic if there is a constant $\delta\geq 0$ such that each point on the side of any geodesic triangle is within the distance $\delta$ from some point on one of the other two sides. The Gromov boundary of $X$, denoted by $\partial_\infty X$, is defined as the set of equivalence classes of geodesic rays, with two geodesic rays $\gamma_i$, $i=1,2$, being equivalent if the Hausdorff distance $\dist_\mathcal{H}(\gamma_1,\gamma_2)<\infty$. There is a cone topology on $X\cup \partial_\infty X$ so that it is metrizable, see \cite{BrHa}. In \cite{BuSc}, Buyalo and Schroeder systematically investigated two different kinds of metrics (namely, Bourdon metric and  Hamenst\"adt metric,  for the definitions see Section \ref{sec-2}) on $\partial_\infty X$ which are respectively based at a point in $X$ and $\partial_\infty X$ (cf. \cite{Bou,Ha}).

As the first aim of this paper, we show that the doubling property of these two conformal gauge on the Gromov boundary of a Gromov hyperbolic space are coincided.

\begin{theorem}\label{thm-1}
Let $X$ be a Gromov hyperbolic space and $\partial_\infty X$ its Gromov boundary. Then $\partial_\infty X$ is doubling for any Bourdon metric if and only if it is doubling for any Hamenst\"adt metric.
\end{theorem}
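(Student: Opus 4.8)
The plan is to exploit the well-known comparison between Bourdon metrics (based at a point $p \in X$) and Hamenstädt metrics (based at a point $\xi \in \partial_\infty X$) together with the fact that, on a Gromov hyperbolic space, changing the basepoint produces a metric in the same conformal gauge, and more importantly that the two families are related by a \emph{sphericalization/flattening} pair. Recall that in a bounded metric space doubling is a quasisymmetry invariant, and more generally sphericalization and flattening are quasimöbius maps; so the first thing I would set up carefully is the precise statement (to be recorded in Section~\ref{sec-2}) that if $\rho_\xi$ is a Hamenstädt metric based at $\xi$, then its sphericalization (with respect to a suitable choice of basepoint data coming from $p$) is bilipschitz, or at least quasimöbius with controlled data, to a Bourdon metric $\rho_p$ on $\partial_\infty X$, and conversely a flattening of a Bourdon metric centered appropriately yields a Hamenstädt metric. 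This is exactly the kind of duality recalled in the introduction (the Balogh--Buckley flattening is dual to the Bonk--Kleiner sphericalization, \cite{BB,BHX}), now transplanted to the Gromov boundary as in \cite{JJ}.

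With that dictionary in hand, the proof splits into two symmetric implications. Suppose $(\partial_\infty X, \rho_p)$ is doubling for some (hence, by the basepoint-change bilipschitz/quasimöbius comparison, every) Bourdon metric. Removing the point $[\gamma]$ corresponding to $\xi$ and applying a flattening transformation, I get a space quasimöbius equivalent to $(\partial_\infty X \setminus \{\xi\}, \rho_\xi)$; since $\partial_\infty X$ with a Bourdon metric is bounded and doubling, and doubling passes to subsets, to the one-point-puncture, and is preserved under quasimöbius maps \emph{of bounded spaces} (or, for the unbounded image, under the locally-quasisymmetric behavior of flattening away from the basepoint), I conclude $(\partial_\infty X, \rho_\xi)$ is doubling. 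One must check that the completion — adding back the point at infinity for $\rho_\xi$, which corresponds to $\xi$ itself not being in the Hamenstädt picture or being the limit point — does not destroy doubling; this is automatic because a metric space is doubling iff its completion is, and the single added point contributes nothing. The converse implication runs the same way with sphericalization in place of flattening: puncture $(\partial_\infty X, \rho_\xi)$ appropriately, sphericalize, land quasimöbius-close to a Bourdon metric, and transport doubling back.

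The main obstacle — and the step I would spend the most care on — is controlling the metric distortion at the \emph{basepoint of the transformation}. Sphericalization and flattening are globally only quasimöbius, not quasisymmetric, and quasimöbius maps need not preserve doubling for unbounded spaces; the potential loss of doubling is concentrated precisely at the center point of the inversion. So the argument has to be arranged so that the "bad" point is always the puncture point $\xi$ (for flattening) or the point added at infinity (for sphericalization), which is a single point and therefore harmless for the doubling condition. Concretely, I would prove a small lemma: if $(Z,d)$ is a bounded doubling metric space and $z_0 \in Z$, then the flattening $(Z \setminus \{z_0\}, \hat d_{z_0})$ is again doubling; this follows because flattening is bilipschitz to $d$ on any annulus $\{r \le d(\cdot, z_0) \le R\}$ with constant depending only on $R/r$, and one covers $Z \setminus \{z_0\}$ by a geometric sequence of such annuli, using boundedness of $Z$ to control one end and the single-point nature of $z_0$ to control the other. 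The dual lemma for sphericalization of an unbounded doubling space is the statement actually proved in \cite{BB,BHX} and can be quoted. Granting these two lemmas, the proof of Theorem~\ref{thm-1} is the short two-implication argument sketched above, with all Gromov-hyperbolic input packaged into the comparison between Hamenstädt/Bourdon metrics and the sphericalization/flattening of one another.
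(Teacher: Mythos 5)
Your proposal is correct in outline and reaches the theorem, but it diverges from the paper at the key step. Both arguments start identically: the Gromov--product relation $(\xi_1|\xi_2)_b\doteq (\xi_1|\xi_2)_w-(\xi|\xi_1)_w-(\xi|\xi_2)_w$ shows that a Hamenst\"adt quasimetric is comparable to the inversion (flattening) of a Bourdon quasimetric at $\xi$; the paper packages exactly this dictionary as Lemma \ref{lem-3}, phrased as the statement that the identity map between the two boundary metrics is $\theta$-quasim\"obius. The difference is in how doubling is transported across that map. You propose to invoke (or reprove via an annular decomposition) the fact that sphericalization and flattening preserve doubling --- i.e.\ essentially the Li--Shanmugalingam/Heer route --- whereas the paper deliberately avoids this and instead proves the general quasim\"obius invariance of doubling (Lemma \ref{lem-2}) from Assouad's embedding theorem: embed the doubling side quasisymmetrically into $\mathbb{R}^N$, then use V\"ais\"al\"a's criterion (a quasim\"obius map fixing $\infty$, or between bounded spaces, is quasisymmetric), normalizing with a Euclidean inversion or the spherical metric as needed, and finish with Tukia--V\"ais\"al\"a. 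Your route is more self-contained for this particular map but proves less; the paper's yields the full quasim\"obius invariance as a byproduct, which is the novelty advertised in Remark 1.2. One place in your sketch needs care: covering a large flattened ball by ``a geometric sequence of annuli'' would, read literally, require unboundedly many pieces; the fix is to note that the entire outer region $\{y:\,d(y,z_0)\ge R\}$ has flattened diameter at most $2/R$, so any flattened ball decomposes into a single bounded-ratio annulus (where the two metrics are bilipschitz after rescaling) plus one tail of small flattened diameter. With that observation, and the routine remarks that a single added or deleted point and a change of parameter $\varepsilon$ (a snowflake) do not affect doubling, your argument closes.
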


The terminology used in Theorem \ref{thm-1} and in the rest of this section will be explained in Section \ref{sec-2}.

\begin{remark} In \cite{LS}, the third author and Shanmugalingam showed that the process of sphericalization preserves the Ahlfors regular and doubling measures in metric spaces. With the aid of this quantitative result, Heer \cite{Heer} proved the quasim\"obius invariance of doubling metric spaces,  which is needed in the proof of Theorem \ref{thm-1}. In this note, we  provide a quite different but direct proof for Heer's result by means of Assouad's embedding Theorem (see \cite[Theorem 8.1.1]{BS}).
\end{remark}

The doubling property of metric spaces plays an important role in the area of analysis on metric spaces. For instance,  Herron \cite{Her06} demonstrated that a Gromov hyperbolic abstract domain with doubling Gromov boundary carries a uniformizing volume growth density.  Recently, Wang and Zhou \cite{WZ} studied the equivalence of weakly quasim\"obius maps and quasim\"obius maps on doubling quasi-metric spaces.

A metric space $X$ is said to be of {\it bounded growth at some scale}, if there exist constants $r$ and $R$ with $R>r>0$, and $N\in \mathbb{N}$ such that every open ball of radius $R$ in $X$ can be covered by $N$ open balls of radius $r$. In \cite{BS}, Bonk and Schramm proved that a Gromov hyperbolic space with bounded growth at some scale embeds rough isometrically into the classical hyperbolic spaces with higher dimension. In \cite{BuSc}, Buyalo and Schroeder provided a different proof of this result. According to \cite[Theorem $9.2$]{BS} every Gromov hyperbolic geodesic space $X$ of bounded growth at some scale has a boundary $\partial_\infty X$ of finite Assouad dimension (which is equivalent to the doubling condition, see \cite{TV}). Here $\partial_\infty X$ is equipped with any visual (Bourdon) metric. Recently, Herron corroborated that a locally Ahlfors regular abstract domain is bounded growth at some scale associated to the quasihyperbolic metric, see \cite[Proposition 3.6]{Her06}. Combining these results with Theorem \ref{thm-1}  we obtain the following two corollaries; for the related definitions see \cite{BS, Her06}.

\begin{corollary}
Let $X$ be a Gromov hyperbolic geodesic metric space with bounded growth at some scale. Then the Gromov boundary equipped with any Bourdon or Hamenst\"adt metric is doubling.
\end{corollary}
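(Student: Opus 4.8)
\begin{pf}[Proof of the Corollary]
The plan is to read this off from two cited structural facts combined with Theorem \ref{thm-1}. First I would invoke \cite[Theorem 9.2]{BS}: since $X$ is a Gromov hyperbolic geodesic space of bounded growth at some scale, its Gromov boundary $\partial_\infty X$, equipped with any visual (Bourdon) metric, has finite Assouad dimension. Next I would use the classical equivalence between finite Assouad dimension and the doubling condition (see \cite{TV}) to conclude that $\partial_\infty X$ is doubling for any Bourdon metric. Finally, Theorem \ref{thm-1} transfers this property across the two conformal gauges on $\partial_\infty X$: being doubling for any Bourdon metric is equivalent to being doubling for any Hamenst\"adt metric, so $\partial_\infty X$ is doubling in the Hamenst\"adt sense as well, which is exactly the assertion.

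The steps that require a word of justification, rather than being genuine obstacles, are two. First, one must be sure that the passage from ``some'' Bourdon metric to ``any'' Bourdon metric is legitimate; this holds because changing the base point or the visual parameter alters a Bourdon metric only by raising it to a fixed positive power and multiplying by bounded factors, and the doubling property (equivalently, finiteness of the Assouad dimension) is stable under such deformations, so the conclusion of \cite[Theorem 9.2]{BS} is base-point and parameter independent. Second, one should check that the formulation of \cite[Theorem 9.2]{BS} applies to the Gromov hyperbolic geodesic spaces considered here and not only to proper ones; this is the case, and no strengthening is needed.

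In short, the substantive mathematical content, namely the interchange between the Bourdon and Hamenst\"adt gauges, is already packaged in Theorem \ref{thm-1}, and the remaining ingredients are direct quotations of \cite{BS} and \cite{TV}; I do not expect any real difficulty beyond correctly citing these results.
\end{pf}
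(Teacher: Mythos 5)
Your proposal matches the paper's argument exactly: the corollary is obtained by combining \cite[Theorem 9.2]{BS} (finite Assouad dimension of $\partial_\infty X$ in any visual/Bourdon metric), the equivalence of finite Assouad dimension with doubling from \cite{TV}, and Theorem \ref{thm-1} to pass to the Hamenst\"adt gauge. No gaps; this is the intended proof.
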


\begin{corollary}Let $\Omega$ be a locally quasiconvex locally Ahlfors $Q$-regular abstract domain with a Gromov hyperbolic quasihyperbolization. Then the Gromov boundary equipped with any Bourdon or Hamenst\"adt metric is doubling.
\end{corollary}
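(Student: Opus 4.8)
The plan is to verify that the quasihyperbolization $(\Omega,k_\Omega)$ satisfies the hypotheses of the preceding corollary, after which the conclusion is immediate. So I would first check that $(\Omega,k_\Omega)$ is a Gromov hyperbolic geodesic metric space. Gromov hyperbolicity is assumed outright. For the geodesic property, I would use that local quasiconvexity of $\Omega$ forces the quasihyperbolic metric $k_\Omega$ to be a length metric, and that together with the local compactness and properness built into the notion of an abstract domain, the Hopf--Rinow theorem upgrades $(\Omega,k_\Omega)$ to a (proper) geodesic metric space.

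Next I would invoke \cite[Proposition 3.6]{Her06}, which asserts precisely that a locally quasiconvex, locally Ahlfors $Q$-regular abstract domain has a quasihyperbolization of bounded growth at some scale. Setting $X=(\Omega,k_\Omega)$, we then have that $X$ is a Gromov hyperbolic geodesic metric space with bounded growth at some scale, i.e. $X$ meets all the hypotheses of the first corollary above.

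Finally I would apply that corollary to this $X$: bounded growth at some scale gives, via \cite[Theorem 9.2]{BS}, that $\partial_\infty X$ has finite Assouad dimension and hence is doubling for any Bourdon metric, and then Theorem \ref{thm-1} transfers the doubling property to any Hamenst\"adt metric. Since $\partial_\infty X=\partial_\infty(\Omega,k_\Omega)$ is exactly the Gromov boundary of $\Omega$ referred to in the statement, this yields the assertion.

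The one point requiring care — and where I would spend essentially all the effort — is reconciling terminology: one must confirm that ``abstract domain'', ``locally quasiconvex'', and ``locally Ahlfors $Q$-regular'' as used here agree with the hypotheses under which \cite[Proposition 3.6]{Her06} is stated, and that the properness needed for the Hopf--Rinow step is part of that framework. Once this matching is made, no new estimates are needed; the corollary is a direct composition of \cite[Proposition 3.6]{Her06}, \cite[Theorem 9.2]{BS}, and Theorem \ref{thm-1}.
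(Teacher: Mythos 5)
Your proposal matches the paper's intended argument exactly: the corollary is obtained by combining \cite[Proposition 3.6]{Her06} (bounded growth at some scale for the quasihyperbolization), \cite[Theorem 9.2]{BS} (doubling of the Gromov boundary in any Bourdon metric), and Theorem \ref{thm-1} (transfer to Hamenst\"adt metrics), with the geodesic/properness of $(\Omega,k)$ supplied by local quasiconvexity as in \cite[Proposition 2.8]{BHK}. Your extra care about matching Herron's terminology is reasonable but the paper treats this as routine.
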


The second goal of this paper is to explore the characterization of Gromov hyperbolic domains; here and hereafter, a Gromov hyperbolic domain always means an incomplete metric space which is Gromov hyperbolic in its quasihyperbolic metric $k$ (see Definition \ref{def-k}). In \cite{KLM}, Koskela, Lammi and Manojlovi\'{c} proved that if $(\Omega,d)$ is a bounded abstract domain, then  $(\Omega,k)$ is Gromov hyperbolic if and only if the length space $(\Omega,l_d)$ satisfies both the Gehring-Hayman condition and the ball separation condition. It is natural to ask whether this result holds for unbounded domains.  In this work, we consider the unbounded case via sphericalization and
establish the following result similar to \cite[Theorem 1.2]{KLM}.

\begin{theorem}\label{thm-2}
Let $Q>1$ and let $(X,d,\mu)$ be an Ahlfors $Q$-regular metric measure space with $(X,d)$ an annular quasiconvex, proper and geodesic space. Let $\Omega\subsetneq X$ be a domain $($an open connected set$)$, and let $l_d$ be the length metric on $\Omega$ associated to $d$. Then $(\Omega,k)$ is Gromov hyperbolic if and only if $(\Omega,l_d)$ satisfies both the  Gehring-Hayman condition and the ball separation condition.
\end{theorem}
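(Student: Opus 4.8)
The plan is to deduce Theorem~\ref{thm-2} from its bounded counterpart \cite[Theorem~1.2]{KLM} by transporting the problem to a bounded domain through sphericalization. If $\Omega$ is bounded, the conclusion is \cite[Theorem~1.2]{KLM} itself (applied directly, or via the argument below run on a sphericalization of $X$); so we may assume that $\Omega$, and hence $X$, is unbounded. Fix $p\in\Omega$ and let $(X_s,d_s)$ be the sphericalization of $(X,d)$ based at $p$, so that $X_s=X\cup\{\infty\}$ is bounded. Let $\Omega_s$ be $\Omega$ equipped with the metric $d_s$ restricted from $X_s$: this is a bounded domain in $X_s$ with $X_s\setminus\Omega_s=(X\setminus\Omega)\cup\{\infty\}$ and closure $\overline\Omega\cup\{\infty\}$. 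Write $k_{d_s}$ and $l_{d_s}$ for the quasihyperbolic and length metrics of $\Omega_s$ determined by $d_s$, and put $\rho_p(z):=(1+d(p,z))^{-2}$.

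The argument rests on four properties of the passage $d\mapsto d_s$. (i) $(X_s,d_s)$ is again Ahlfors $Q$-regular for the sphericalized measure --- this is the theorem of the third author and Shanmugalingam \cite{LS}. (ii) Annular quasiconvexity is preserved: every $d_s$-annulus about a point $a\in X_s$ is, up to a comparison of radii, a $d$-annulus about $a$ (or about $p$, when $a$ is near $\infty$), and the curves realizing annular quasiconvexity in $(X,d)$ are carried to such curves in $(X_s,d_s)$ via the conformal factor $\rho_p$; replacing if necessary $d_s$ by its length metric --- which alters neither $l_{d_s}$ nor, up to bilipschitz equivalence, $d_s$ --- we may take $(X_s,d_s)$ geodesic. (iii) Sphericalization does not essentially change the quasihyperbolic metric: since $p\in\Omega$ gives $d(x,X\setminus\Omega)\lesssim 1+d(p,x)$, one has
\[
 d_s(x,X_s\setminus\Omega_s)\ \asymp\ \rho_p(x)\,d(x,X\setminus\Omega),\qquad x\in\Omega,
\]
while the $d_s$-length of a curve $\gamma$ satisfies $\ell_{d_s}(\gamma)\asymp\int_\gamma\rho_p\,ds$ (with $ds$ the $d$-arclength), so $\rho_p$ cancels in the integrals defining the two quasihyperbolic metrics and $k_{d_s}\asymp k$ on $\Omega$; as $X$ and $X_s$ are proper, $(\Omega,k)$ and $(\Omega_s,k_{d_s})$ are geodesic, hence one is Gromov hyperbolic exactly when the other is. (iv) The Gehring-Hayman condition and the ball separation condition (GH and BS for short) are stable under sphericalization: $(\Omega_s,l_{d_s})$ satisfies both iff $(\Omega,l_d)$ does.

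Granting (i)--(iv), the theorem follows from the chain of equivalences
\[
 (\Omega,k)\ \text{Gromov hyperbolic}\iff(\Omega_s,k_{d_s})\ \text{Gromov hyperbolic}\iff(\Omega_s,l_{d_s})\ \text{satisfies GH and BS}\iff(\Omega,l_d)\ \text{satisfies GH and BS},
\]
whose first link is (iii), whose middle link is \cite[Theorem~1.2]{KLM} applied to $\Omega_s$ --- a bounded domain in the Ahlfors $Q$-regular, annular quasiconvex, proper geodesic space $X_s$ by (i) and (ii) --- and whose last link is (iv).

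The technical heart, and the step I expect to be the main obstacle, is (iv). Because sphericalization contracts distances near infinity, $l_d$ and $l_{d_s}$ are not bilipschitz equivalent, so the stability of GH and BS cannot be read off from any global comparison of length metrics. I would instead exploit $k\asymp k_{d_s}$: quasihyperbolic geodesics of $\Omega$ and of $\Omega_s$ are mutually quasigeodesic and, on each scale, shadow one another. Together with the length comparison $\ell_{d_s}(\gamma)\asymp\int_\gamma\rho_p\,ds$ and the Harnack-type slow oscillation of $\rho_p$ along a curve that stays in a fixed $d$-annulus, this transfers the length inequality defining GH between the two pictures; the same comparisons applied to balls $B(z,c\,d(z,X\setminus\Omega))$ versus $B_{d_s}(z,c'\,d_s(z,X_s\setminus\Omega_s))$ transfer the separation inequality defining BS. For the reverse transfers one may either rerun the estimates in the opposite direction or appeal to the duality of \cite{BHX}, by which a flattening of $\Omega_s$ recovers $\Omega$ up to bilipschitz equivalence.
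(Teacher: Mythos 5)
Your overall strategy --- sphericalize to reduce to the bounded case of \cite[Theorem 1.2]{KLM} --- is the right instinct, and steps (i)--(iii) are essentially sound (although you should base the sphericalization at a point $a\in\partial\Omega$ rather than at $p\in\Omega$: it is for a base point outside $\Omega$ that \cite[Theorem 4.12]{BHX} gives the bilipschitz equivalence $k\asymp \widehat{k}_a$ and that the distance-to-boundary comparison you assert is available off the shelf; with $p\in\Omega$ you would have to reprove these with constants depending on $d(p,\partial\Omega)$). The genuine gap is exactly where you locate it: step (iv). The Gehring--Hayman and ball separation conditions are statements about exact quasihyperbolic geodesics and about metric balls ${\mathbb{B}}(z,C d_\Omega(z))$, and neither is a quasi-isometry invariant of $(\Omega,k)$; under sphericalization a $k$-geodesic is only a $\widehat{k}_a$-quasigeodesic, the conformal factor $\rho_a$ oscillates over unboundedly many scales along a single geodesic, and the competitor curves $\beta$ realizing the infimum of length may live at completely different scales from $\gamma$. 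Your sketch (Harnack-type control of $\rho_a$ on fixed annuli, geodesic stability, duality with flattening) is a plausible program but not a proof, and it is precisely the hard analytic content of the statement; as written, two of the three links in your chain of equivalences are not established.

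The paper avoids (iv) altogether. Sufficiency (GH $+$ BS $\Rightarrow$ hyperbolic) is quoted directly from \cite[Theorem 6.1]{BB03}, which does not require boundedness. For necessity in the unbounded case it invokes \cite[Corollary 5.2]{KLM}, which yields GH and BS for a Gromov hyperbolic domain once $(\Omega,k)$ is known to be \emph{roughly starlike}. Rough starlikeness, unlike GH/BS, is a rough-quasi-isometry invariant (Lemma \ref{lem-4}), so it transfers for free through the bilipschitz identity $(\Omega,k)\to(\Omega,\widehat{k}_a)$; the bounded sphericalized picture is then used only to prove rough starlikeness of $(\Omega,\widehat{k}_a)$ via the annulus-point/arc-point dichotomy of Lemma \ref{z-0}. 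If you want to salvage your argument, replace step (iv) by this reduction: transfer a quasi-isometry invariant (rough starlikeness) through the sphericalization instead of the scale-sensitive conditions themselves.
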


In \cite{BHK}, Bonk, Heinonen and Koskela proved that a bounded domain in $\mathbb{R}^n$ is uniform if and only if it is Gromov hyperbolic with respect to the quasihyperbolic metric and there is a natural quasisymmetric identification between the Euclidean boundary and the Gromov boundary. Recently, Lammi  \cite{La} showed that the inner boundary of a Gromov hyperbolic domain with a suitable growth condition in the quasihyperbolic metric is homeomorphic to the Gromov boundary. Note that the quasihyperbolic boundary condition stated in \cite{La} implies the boundness of the domain. So it is natural  to ask whenever the Gromov boundary and the inner metric boundary of an unbounded Gromov hyperbolic domain are homeomorphic.

Motivated by these considerations, we focus on the study of the boundary behavior of Gromov hyperbolic domains, particularly, for unbounded domains. It was shown in \cite{BHK, GO} that there is a characterization of uniform domains (see Definition \ref{def-u}) in terms of two metrics, the quasihyperbolic metric $k$ and the distance ratio metric $j$ (see (\ref{def-j})). That is, a domain $D$ is uniform in $\mathbb{R}^n$ if and only if there exist constants $c_1\ge1$ and $c_2\geq 0$ such that for all $x,y\in D$,
\be\label{l-1} k_D(x,y)\le c_1 j_D(x,y) +c_2.\ee
Subsequently, Vuorinen observed that the additive constant $c_2$ on the right hand side of  (\ref{l-1}) can be chosen to be $0$.  This observation leads to the definition of $\varphi$-{\it uniform domains} introduced in \cite{Vu85}.

\begin{definition}\label{def-v} Let $X$ be a rectifiably connected, locally compact and complete metric space, and let $D\subsetneq X$ be a domain with $d_D(x)=\dist(x,\partial D)$ for all $x\in D$. Let $\varphi:[0,\infty)\to [0,\infty)$ be a homeomorphism. We say that $D$ is $\varphi$-{\it uniform} if for all $x$, $y$ in $D$,
$$k_D(x,y)\leq \varphi(r_D(x,y))\;\;\;\;\;\mbox{where}\;\;\;\;\;r_D(x,y)=\frac{d(x,y)}{d_D(x)\wedge d_D(y)}.$$
Here and hereafter, $r\wedge s=\min\{r,s\}$ for all $r,s\in \mathbb{R}$.
\end{definition}

In \cite{Vai-4}, V\"ais\"al\"a  has also investigated this class of domains, and pointed out that these two classes of domains are the same provided $\varphi$ is a slow function (i.e. a function $\varphi$ satisfying $\lim_{t\to\infty} \varphi(t)/t=0$). We remark that every convex domain is $\varphi$-uniform with $\varphi(t)=t$. However, in general, convex domains need not be uniform. For example, $D=\{(x_1,x_2)\in \mathbb{R}^2:0<x_2<1\}$ is $\varphi$-uniform with $\varphi(t)=t$, but it is not uniform.

The third purpose of this paper is to study whether or not the $\varphi$-uniformity condition is sufficient for the homeomorphism equivalence between the Gromov boundary and metric boundary of Gromov hyperbolic domain. Our result in this direction is as follows.

\begin{theorem}\label{thm-3}
Let $Q>1$ and let $(\Omega,d,\mu)$ be a locally compact, $c$-quasiconvex, Ahlfors $Q$-regular incomplete metric measure space. Assume that $(\Omega,k)$ is roughly starlike Gromov hyperbolic and $(\Omega,d)$ is $\varphi$-uniform, where $k$ is the quasihyperbolic metric of $\Omega$.
\begin{enumerate}
  \item If $\Omega$ is bounded and $\int_1^\infty \frac{dt}{{\varphi^{-1}(t)}}<\infty$, then the Gromov boundary and the metric boundary of $\Omega$ are homeomorphic;
  \item  If $\Omega$ is unbounded and $\int_1^\infty \frac{dt}{\sqrt{\varphi^{-1}(t)}}<\infty$, then the Gromov boundary of $\Omega$ and $\partial \Omega\cup\{\infty\}$ are homeomorphic. Here and hereafter, $\overline{\Omega}\cup \{\infty\}$ is the one point compactification of the space $(\Omega,d)$ and $\partial \Omega\cup\{\infty\}=\overline{\Omega}\cup \{\infty\}\setminus \Omega$.
\end{enumerate}

\end{theorem}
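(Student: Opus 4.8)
\textit{Plan of proof.} I would establish (1) directly and deduce (2) from it by sphericalization. Fix a base point $o\in\Omega$ and write $d_\Omega(x)=\dist(x,\partial\Omega)$, $r_\Omega(x,y)=d(x,y)/(d_\Omega(x)\wedge d_\Omega(y))$. Two observations are used throughout. (a) The quasihyperbolic metric always dominates the distance ratio metric, so for all $x,y\in\Omega$
\[
\log\!\bigl(1+r_\Omega(x,y)\bigr)\ \le\ k(x,y)\ \le\ \varphi\bigl(r_\Omega(x,y)\bigr),
\]
the right inequality being $\varphi$-uniformity; taking $x=o$ and using boundedness gives the decay estimate $d_\Omega(z)\le\diam(\Omega)/\varphi^{-1}(k(o,z))$ once $k(o,z)$ is large (and always $k(o,z)\ge|\log(d_\Omega(o)/d_\Omega(z))|$). (b) Since $\Omega$ is $c$-quasiconvex, $d$ and $l_d$ are bi-Lipschitz, so the $d$-completion $\overline{\Omega}$ and the $l_d$-completion agree; when $\Omega$ is bounded it is doubling and bounded, hence totally bounded, so $\overline{\Omega}$ is compact, while $(\Omega,k)$ is complete, locally compact and geodesic, hence proper (Hopf--Rinow), and $\Omega\cup\partial_\infty\Omega$ is compact and metrizable. \emph{Step 1 (geodesic rays land).} If $\gamma\colon[0,\infty)\to\Omega$ is an arclength-parametrized quasihyperbolic geodesic ray, then $|\dot\gamma|_d=d_\Omega\circ\gamma$ a.e., so by (a) its $d$-length obeys $\ell_d(\gamma)=\int_0^\infty d_\Omega(\gamma(t))\,dt\le C+\diam(\Omega)\int_1^\infty dt/\varphi^{-1}(t)<\infty$; hence $\gamma(t)$ converges in $\overline{\Omega}$, and because $d_\Omega(\gamma(t))\to0$ the limit lies in $\partial\Omega$. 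The same estimate, from a variable base point, shows that the $d$-diameter of the tail $\gamma|_{[T,\infty)}$ tends to $0$ as $T\to\infty$, uniformly over rays issuing from a fixed compact set; this is precisely where the hypothesis $\int_1^\infty dt/\varphi^{-1}(t)<\infty$ enters.

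\emph{Step 2 (the boundary map).} Set $\Phi(\xi)=\lim_t\gamma_\xi(t)\in\partial\Omega$ for any quasihyperbolic geodesic ray $\gamma_\xi$ from $o$ representing $\xi\in\partial_\infty\Omega$. This is well defined and base-point independent: two such rays lie within bounded $k$-Hausdorff distance, and the left inequality in (a) shows that $k$-close points with small $d_\Omega$ are $d$-close. Surjectivity: given $p\in\partial\Omega$ choose $x_i\to p$ in $d$; then $k(o,x_i)\to\infty$, and any subsequential limit (Arzel\`a--Ascoli, properness) of the quasihyperbolic geodesics $[o,x_i]$ is a ray $\gamma$ with $\Phi([\gamma])=p$ by Step 1. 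Continuity of $\Phi$ and of $\Phi^{-1}$ follows from the same circle of ideas: a sequence in $\Omega$ converges to $\xi$ in the Gromov compactification iff the Gromov products $(\cdot\mid\cdot)_o$ blow up iff, by the decay estimate and $\delta$-thinness of triangles, its points eventually lie in an arbitrarily small $d$-ball about $\Phi(\xi)$. Since both compactifications are compact Hausdorff, it then remains only to prove that $\Phi$ is injective.

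\emph{Injectivity --- the main obstacle.} Suppose $\xi\ne\eta$ but $\Phi(\xi)=\Phi(\eta)=p$. For rays $\gamma_\xi,\gamma_\eta$ from $o$, the quasihyperbolic geodesics $\beta_t=[\gamma_\xi(t),\gamma_\eta(t)]$ satisfy $\dist_k(o,\beta_t)\le(\gamma_\xi(t)\mid\gamma_\eta(t))_o+C\delta\le M$ for all $t$, so each $\beta_t$ meets the compact set $K=\{d_\Omega\ge d_\Omega(o)e^{-M}\}$. Letting $t\to\infty$ and using properness together with $\delta$-thinness (the meeting points stay at bounded $k$-distance from $o$, so the two halves of $\beta_t$ fellow-travel $\gamma_\xi$ and $\gamma_\eta$), one extracts a bi-infinite quasihyperbolic geodesic $L$ whose two ends represent $\xi$ and $\eta$ but whose two $d$-limits both equal $p$; Step 1 applied to the two halves of $L$ gives $\ell_d(L)<\infty$. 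The heart of the argument is to rule such an $L$ out: for $s<0<t$ with $d_\Omega(L(s))=d_\Omega(L(t))=\varepsilon\downarrow0$ one has $k(L(s),L(t))=t-s\to\infty$ while $d(L(s),L(t))\le\ell_d\bigl(L|_{(-\infty,s]}\bigr)+\ell_d\bigl(L|_{[t,\infty)}\bigr)\to0$, and balancing these against $\varphi$-uniformity contradicts $\int_1^\infty dt/\varphi^{-1}(t)<\infty$ --- equivalently, under the hypotheses one has a Gehring--Hayman inequality, and a $d$-quasigeodesic joining two points near $p$ cannot cross $K$. I expect this step, with the careful construction of the limit object $L$, to be the technically heaviest part; it is exactly here that domains like the strip $\{0<x_2<1\}$ get excluded. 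This completes (1), since a continuous bijection between compact Hausdorff spaces is a homeomorphism.

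\emph{Proof of (2).} Let $\widehat d=\widehat d_o$ be the sphericalization of $(\Omega,d)$ at $o$. Then $(\Omega,\widehat d)$ is bounded, locally compact and quasiconvex, it is Ahlfors $Q$-regular by \cite{LS}, its completion is $\overline{\Omega}\cup\{\infty\}$ and its metric boundary is $\partial\Omega\cup\{\infty\}$; moreover $k_{(\Omega,\widehat d)}$ is bi-Lipschitz to $k$ by \cite{BHX}, so $(\Omega,k_{(\Omega,\widehat d)})$ is again roughly starlike Gromov hyperbolic with the same Gromov boundary. It remains to check that $(\Omega,\widehat d)$ is $\psi$-uniform for a homeomorphism $\psi$ with $\int_1^\infty dt/\psi^{-1}(t)<\infty$. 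Using $\widehat d(x,y)\asymp d(x,y)/\bigl((1+d(x,o))(1+d(y,o))\bigr)$ and $\widehat d_\Omega(x)\asymp\min\{d_\Omega(x),\,1+d(x,o)\}/(1+d(x,o))^2$, one expresses the distance ratio $r_{(\Omega,\widehat d)}$ in terms of $r_\Omega$ and of $1+d(\cdot,o)$; the worst configuration --- points far from $o$ and close to the finite boundary --- costs a square root, so $\varphi$-uniformity of $(\Omega,d)$ yields $\psi$-uniformity of $(\Omega,\widehat d)$ with $\psi^{-1}(t)\gtrsim\sqrt{\varphi^{-1}(t)}$, whence $\int_1^\infty dt/\psi^{-1}(t)\lesssim\int_1^\infty dt/\sqrt{\varphi^{-1}(t)}<\infty$. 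Now part (1) applies to $(\Omega,\widehat d)$ and identifies its Gromov boundary --- which, by the bi-Lipschitz equivalence of the quasihyperbolic metrics, coincides with that of $(\Omega,d)$ --- homeomorphically with $\partial\Omega\cup\{\infty\}$, as required. The obstacle here is purely quantitative: tracking how $\varphi$-uniformity, i.e.\ the distortion of $r_\Omega$ and of $d_\Omega$, transforms under sphericalization near the point at infinity.
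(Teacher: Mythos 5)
Your part (2) follows essentially the paper's own route: sphericalize, verify that $\varphi$-uniformity survives with a square-root loss in the control function (this is the content of Lemma \ref{lem-6}, which gives $\psi(t)=80cc_0\varphi(256(1+t)^2-1)$ and hence explains the hypothesis $\int_1^\infty dt/\sqrt{\varphi^{-1}(t)}<\infty$), check that Gromov hyperbolicity, rough starlikeness and Ahlfors $Q$-regularity are preserved, and reduce to the bounded case. The paper sphericalizes at a boundary point $a\in\partial\Omega$ rather than at an interior point, which simplifies the estimate for $\widehat d_a(x)$ since then $d_\Omega(x)\le d(x,a)$ always, but that difference is cosmetic.

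The gap is in part (1). The paper does not reprove the boundary identification from scratch: it derives the Gehring--Hayman condition from Ahlfors $Q$-regularity together with roughly starlike Gromov hyperbolicity via \cite[Theorem 5.1]{KLM}, derives the quasihyperbolic growth condition $k(w,x)\le\phi(d(w)/d(x))$ with $\int_1^\infty dt/\phi^{-1}(t)<\infty$ from $\varphi$-uniformity (Lemma \ref{lem-5}), and then quotes \cite[Theorem 1.1]{La}. You instead set out to reprove Lammi's theorem, and the step you yourself flag as the main obstacle --- injectivity --- is exactly where your sketch does not close. Your proposed ``balancing'' compares $k(L(s),L(t))=t-s$ with $\varphi\bigl(d(L(s),L(t))/\varepsilon\bigr)$, where $\varepsilon=d_\Omega(L(s))=d_\Omega(L(t))$. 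But the only lower bound on $\varepsilon$ that the hypotheses provide is the exponential one, $\varepsilon\ge d_\Omega(L(0))e^{-\min(|s|,t)}$, and substituting it yields only $\varphi^{-1}(t-s)\le C\,\eta\, e^{\min(|s|,t)}$ with $\eta\to0$; for admissible gauges such as $\varphi^{-1}(t)=t^2$ this is perfectly consistent and produces no contradiction. What actually excludes the bi-infinite geodesic $L$ is the Gehring--Hayman inequality: since $\Omega$ is $c$-quasiconvex, $L(s)$ and $L(t)$ can be joined by a competitor of $d$-length at most $2c\eta\to0$, so Gehring--Hayman forces $\ell_d(L|_{[s,t]})\to0$, contradicting $\ell_d(L|_{[s,t]})\ge d_\Omega(L(0))-\varepsilon$. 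You allude to Gehring--Hayman as if it were ``equivalent'' to your balancing, but it is an additional analytic input, and in this setting it comes precisely from the Ahlfors $Q$-regularity via \cite[Theorem 5.1]{KLM} --- a hypothesis your part (1) otherwise never uses in an essential way. Until you either import that result or prove the Gehring--Hayman condition yourself, injectivity, and hence part (1) and the part (2) that rests on it, is not established.
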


Note that Theorem \ref{thm-3} is new even for  Gromov hyperbolic domains in $\mathbb{R}^n$. It follows from \cite[Theorem 1.11]{BHK} that (inner) uniform domains in $\mathbb{R}^n$ are Gromov hyperbolic. However,  $\varphi$-uniform domains need not be Gromov hyperbolic. Let $G=\{(x_1,x_2,x_3)\in \mathbb{R}^3:0<x_3<1\}$. Thus $G$ is a convex domain which is $\varphi$-uniform with $\varphi(t)=t$. Moreover, it is not difficult to check that $G$ is $LLC_2$ (see \cite[Chapter 7]{BHK}) but not $c$-uniform for any $c\geq 1$. Hence we see from \cite[Proposition 7.12]{BHK} that $G$ is not $\delta$-hyperbolic for any $\delta\geq 0$.

We note that a Gromov hyperbolic $\varphi$-uniform  domain in  $\mathbb{R}^n$ is not necessarily uniform.   For instance, $\Omega=\{(x_1,x_2)\in \mathbb{R}^2:0<x_2<1\}$ is a plane simply connected convex domain, and therefore it is a Gromov hyperbolic $\varphi$-uniform  domain. But it is not uniform. Therefore, Theorem \ref{thm-3} is a generalization of the results in \cite{BHK, La}. Moreover, the unbounded case is also in our considerations by using the sphericalized transformation.

The organization of this paper is as follows. In Section \ref{sec-2}, we recall some definitions and preliminary results. In Section \ref{sec-3}, we will prove the main results.

\section{Preliminary and Notations}\label{sec-2}
\subsection{Metric geometry}
In this paper, we always use $(X,d)$, $(X',d')$, $(Y, d)$  etc to denote metric spaces.
For $(X,d)$, its metric completion and metric boundary are denoted by $\overline{X}$ and $\partial X:= \overline{X}\setminus X$, respectively. A domain $D\subset X$ is an open connected set.


The open (resp. closed) metric ball with center $x\in X$ and radius $r>0$ is denoted by
$${\mathbb{B}}(x,r)=\{z\in X:\; d(z,x)<r\}\;\;(\mbox{resp.}\;\; \overline{{\mathbb{B}}}(x,r)=\{z\in X:\; d(z,x)\leq r\}).$$ We say that $X$ is {\it incomplete} if $\partial X\neq \emptyset$. $X$ is called {\it proper} if all its closed balls are compact.


By a curve, we mean a continuous function $\gamma:$ $I=[a,b]\to X$. The length of $\gamma$ is denoted by
$$\ell(\gamma)=\sup\Big\{\sum_{i=1}^{n}d\big(\gamma(t_i),\gamma(t_{i-1})\big)\Big\},$$
where the supremum is taken over all partitions $a=t_0<t_1<t_2<\ldots<t_n=b$. The curve is rectifiable if $\ell(\gamma)<\infty$.
 We also denote the image $\gamma(I)$ of $\gamma$ by $\gamma$, and the subcurve of $\gamma$ with endpoints $x$ and $y$ by $\gamma[x,y]$. A curve $\gamma$ is called {\it rectifiable}, if the length $\ell_d(\gamma)<\infty$.
A metric space $(X, d)$ is called {\it rectifiably connected} if every pair of points in $X$ can be joined with a rectifiable curve $\gamma$.


Suppose the curve $\gamma$ is rectifiable. The length function $s_{\gamma}$: $[a,b]\to [0, \ell(\gamma)]$ is defined by $$s_{\gamma}(t)=\ell(\gamma[a,t]).$$
Then there is a unique curve $\gamma_s:$ $[0, \ell(\gamma)]\to X$ such that $$\gamma=\gamma_s\circ s_{\gamma}.$$ Obviously,
$\ell(\gamma_s[0,t])=t$ for each $t\in [0, \ell(\gamma)]$. The curve $\gamma_s$ is called the {\it arc-length parametrization} of $\gamma$.

For a rectifiable curve $\gamma$ in $X$, the line integral over $\gamma$ of each Borel function $\varrho:$ $X\to [0, \infty)$ is defined as follows:
$$\int_{\gamma}\varrho ds=\int_{0}^{\ell(\gamma)}\varrho\circ \gamma_s(t) dt.$$




For $c\geq 1$, a curve $\gamma\subset X$, with endpoints $x,y$, is {\it$c$-quasiconvex},
$c \geq 1$, if its length is at most $c$ times the distance between its endpoints; i.e., if $\gamma$
satisfies $$\ell(\gamma)\leq cd(x,y).$$
$X$ is {\it $c$-quasiconvex} if each pair of points can be joined by a $c$-quasiconvex curve.

Let $c \geq 1$ and $0 < \lambda \leq 1/2$. An incomplete metric space $(D, d)$ is said to be {\it locally $(\lambda,c)$-quasiconvex,} if for all $x\in D$, each pair of points in ${{\mathbb{B}}}(x, \lambda d_D(x))$ can be joined with a $c$-quasiconvex curve. A {\it geodesic} $\gamma$ joining $x$ to $y$ in $X$ is a map $\gamma:I=[0,l]\to X$ from an interval $I$ to $X$ such that $\gamma(0)=x$, $\gamma(l)=y$ and
$$\;\;\;\;\;\;\;\;d(\gamma(t),\gamma(t'))=|t-t'|\;\;\;\;\;\;\;\;\;\;\;\;\;\;\;\;\;\;\;\;\mbox{for all}\;\;t,t'\in I.$$
If $I=[0,\infty)$ or $\mathbb{R}$, then $\gamma$ is called a {\it geodesic ray} or a {\it geodesic line}.  A metric space $X$ is said to be {\it geodesic} if every pair of points can be joined by a geodesic arc.

Every rectifiably connected metric space $(X, d)$ admits a length (or intrinsic) metric, its  so-called length distance given by
$$\ell_d(x, y) = \inf\ell_d(\gamma),$$
where the infimum is taken over all rectifiable curves $\gamma$
joining $x$ to $y$ in $X$.

\begin{definition}
Let $X$ be a metric space with $p\in X$, and let $c\geq 1$. We say that $X$ is $c$-{\it annular quasiconvex with respect to $p$}, if for all $r>0$, every pair of points in the annular $\overline{{\mathbb{B}}}(p,2r)\setminus {\mathbb{B}}(p,r)$ can be joined by a $c$-quasiconvex curve lying in $\overline{{\mathbb{B}}}(p,2cr)\setminus {\mathbb{B}}(p,r/c)$. $X$ is called $c$-{\it annular quasiconvex} if it is $c$-annular quasiconvex at each point.
\end{definition}



\begin{definition}\label{def-k}  Let $(X,d)$ be a rectifiably connected, locally compact and complete metric space, and let $D\subsetneq X$ is a domain. The {\it quasihyperbolic length} of a curve $\gamma \subset D$ is defined as
$$\ell_{k}(\gamma)=\ell_{k_D}(\gamma)=\int_{\gamma}\frac{|dz|}{d_D(z)}.$$
For any $x,y\in D$, the {\it quasihyperbolic distance}
$k(x,y)$ between $x$ and $y$ is defined by
$$k(x,y)=k_D(x,y)=\inf\ell_{k}(\gamma),
$$
where the infimum is taken over all rectifiable curves $\gamma$
joining $x$ to $y$ in $D$.
\end{definition}
We remark that the resulting space $(D,k)$ is complete, proper and geodesic whenever $D$ is locally quasiconvex (cf. \cite[Proposition $2.8$]{BHK}). There is another useful metric in the study of geometric function theory. We recall the definition of distance ratio metric $j$ as follows:
\be\label{def-j} j_D(x,y)=\log\Big(1+\frac{d(x,y)}{d_D(x)\wedge d_D(y)}\Big).\ee

\begin{definition} Let $(X,d)$ be a rectifiably connected, locally compact and complete metric space, and let $D\subsetneq X$ is a domain. Let $C_{gh}\geq 1$ be a constant. We say that $D$ satisfies the {\it $C_{gh}$-Gehring-Hayman inequality}, if for all $x$, $y$ in $D$ and for each
quasihyperbolic geodesic $\gamma$ joining $x$ and $y$ in $D$, we have
$$\ell(\gamma)\leq C_{gh}\ell(\beta_{x,y}),$$
where $\beta_{x,y}$ is any other curve joining $x$ and $y$ in $D$. In other words,  quasihyperbolic geodesics are essentially the shortest curves in $D$.
\end{definition}

\begin{definition} Let $(X,d)$ be a rectifiably connected, locally compact and complete metric space, and let $D\subsetneq X$ is a domain.  Let  $C_{bs}\geq 1$ be a constant. We say that $D$ satisfies the {\it $C_{bs}$-ball separation condition}, if for all $x$, $y$ in $D$ and for each
quasihyperbolic geodesic $\gamma$ joining $x$ and $y$ in $D$, we have for every $z\in \gamma$,
$${\mathbb{B}}(z,C_{bs}d_D(z)) \cap \beta_{x,y} \not=\emptyset ,$$
where $\beta_{x,y}$ is any other curve joining $x$ and $y$ in $D$.
\end{definition}

\begin{definition}\label{def-u} Let $(X,d)$ be a rectifiably connected, locally compact and complete metric space, and let $D\subsetneq X$ is a domain. We say that $D$ is $c$-{\it uniform} provided there exists a constant $c$
with the property that each pair of points $x$, $y$ in $D$ can
be joined by a rectifiable arc $\gamma$ in $D$ satisfying
\begin{enumerate}
\item $\min\{\ell(\gamma[x,z]),\ell(\gamma[z,y])\}\leq c\,d_D(z)$ for all $z\in \gamma$, and
\item $\ell(\gamma)\leq c\,d(x,y)$,
\end{enumerate}
\noindent where $\gamma[x,z]$ the part of $\gamma$ between $x$ and $z$.
\end{definition}

\subsection{Mappings on metric spaces}
In this part, we recall certain definitions for mappings between metric spaces. Here primes always denote the images of points under $f$, for example, $x'=f(x)$.

A quadruple in $X$ is an ordered sequence $Q = (x,y,z,w)$ of four distinct points
in $X$. The {\it cross ratio} of $Q$ is defined to be the number
$$r(x,y,z,w) = \frac{d(x,z)d(y,w)}{d(x,y)d(z,w)}.$$

Observe that the definition is extended in the well known manner to the case
where one of the points is $\infty$. For example,
$$r(x,y,z,\infty) = \frac{d(x,z)}{d(x,y)}.$$

Suppose that $\eta$ and $\theta$ are homeomorphisms from $[0, \infty)$ to $[0, \infty)$, and that $f:$ $(X_1,d_1)\to (X_2,d_2)$ is an embedding between two metric spaces. Then
\begin{enumerate}
  \item we call that $f$ is {\it $L$-bilipschitz} for some $L\geq 1$ if for all $x,y\in X_1,$
   $$L^{-1}d_1(x,y)\le d_2(x',y')\le Ld_1(x,y).$$
  \item $f$ is said to be {\it $\eta$-quasisymmetric} if for all $x$, $y$ and $z$ in $X_1$,
   $$d_1(x,y)\leq t d_1(x,z)\;\;\;\;\;\;\mbox{implies that}\;\;\;\;\;\;d_2(x',y')\leq \eta(t) d_2(x',z').$$
  \item $f$ is called {\it $\theta$-quasim\"obius} if  for all $x,y,z,w$ in $X_1$,
   $$r(x,y,z,w)\leq t\;\;\;\;\;\;\mbox{implies that}\;\;\;\;\;\;r(x',y',z',w')\leq \theta(t).$$
\end{enumerate}

For a set $A$ in a metric space $X$, it is called {\it $c$-cobounded} in $X$ for $c\geq 0$, if  every point $x\in X$ has distance at most $c$ from $A$. If $A$ is $c$-cobounded for some $c\geq 0$, then we say that $A$ is {\it cobounded} in $X$ \cite{BS}.

\begin{definition} Let $\lambda\geq 1$ and $c\geq 0$. A mapping  $f:$ $(X_1,d_1)\to (X_2,d_2)$  is said to be a {\it roughly $(\lambda, c)$-quasi-isometry}, if $f(X_1)$ is $c$-cobounded in $X_2$ and  for all $x,y\in X$,
$$\frac{1}{\lambda}d_1(x,y)-c\leq d_{2}(x',y')\leq \lambda d_1(x,y)+c.$$
\end{definition}

\subsection{Sphericalization of metric measure spaces} In this subsection, we recall some materials concerning metric measure spaces and sphericalization,  for which we refer to standard references \cite{BB03,HSX,LS}.

Let $(X, d)$ be a metric space. $X$ is said to be {\it doubling} if there is a constant $C$ such that every (metric) ball ${\mathbb{B}}$ in $X$ can be covered with at most $C$ balls of half the radius of ${\mathbb{B}}$. A positive Borel measure $\mu$ on $X$ is {\it doubling} if there is a constant $C_\mu$ such that
$$\mu({\mathbb{B}}(x,2r))\leq C_\mu \mu({\mathbb{B}}(x,r))$$
for all $x\in X$ and $r>0$. Moreover, $X$ is said to be {\it Ahlfors $Q$-regular} if it admits a positive Borel measure $\mu$ such that
$$C^{-1}R^Q\leq \mu({{\mathbb{B}}}(x,R)) \leq C R^Q$$
for all $x\in X$ and $0<R< \diam(X)$ (it is possible that the diameter of $X$ satisfies $\diam(X)=\infty$), where $C\geq 1$ and $Q>0$ are constants. Note that Ahlfors regular spaces are necessarily doubling. For instance, the Euclidean space $\mathbb{R}^n$ with Lebesgue measure satisfies the Ahlfors $n$-regularity.

Given an unbounded locally compact metric space $(X,d)$ and a non-isolated point $a\in X$, we consider the one point compactification $\dot{X}=X\cup \{\infty\}$ and define $d_a:\dot{X}\times \dot{X}\to[0,\infty)$ as follows
$$d_a(x,y)=d_a(y,x)=\begin{cases}
\displaystyle\; \frac{d(x,y)}{[1+d(x,a)][1+d(y,a)]},\;
\;\;\;\;\mbox{if}\;\;x,y\in X,\\
\displaystyle\;\;\;\;\;\frac{1}{1+d(x,a)},\;\;\;\;
\;\;\;\;\;\;\;\;\;\;\;\;\;\;\;\;\; \mbox{if}\;\;y=\infty\; \mbox{and}\;x\in X,\\
\displaystyle\;\;\;\;\;\;\;\;\;\;0,
\;\;\;\;\;\;\;\;\;\;\;\;\;\;\;\;\;\;\;\;\;\;\;\;\;\;\;\;\;\;\;\mbox{if}\;\; x=\infty=y.
\end{cases}
$$

In general, $d_a$ is not a metric on $X$, but a quasimetric. However, there is a standard procedure, known as {\it chain construction}, to construct a metric from a quasimetric as follows. Define
$$\widehat{d}_a(x,y):=\inf \Big\{ \sum_{j=0}^k d_a(x_j,x_{j+1})\Big\},$$
where the infimum is taken over all finite sequences $x=x_0,x_1,...,x_k,x_{k+1}=y$ from $\dot{X}$.

Then $(\dot{X},\widehat{d}_a)$ is a metric space and it is said to be the {\it sphericalization} of $(X,d)$ associated to the point $a\in \dot{X}$. Moreover, by \cite[Lemmma 2.2.5]{BuSc}, we have for all $x,y\in \dot{X}$
\be\label{z-1.1} \frac{1}{4}d_a(x,y)\leq \widehat{d}_a(x,y)\leq d_a(x,y).\ee

In the case that $(X,d)$ is a rectifiably connected unbounded metric space, we define a Borel function $\rho_a:X\to [0,\infty)$ by
$$\rho_a(x)=\frac{1}{[1+d(a,x)]^2}.$$
A similar argument as \cite[4.1 and (4.2)]{BHX}, we obtain that for any rectifiable curve $\gamma$ joining $x$ and $y$,
\be\label{z-1.2} \ell_{\widehat{d}_a}(\gamma)=\int_\gamma \rho_a(z)|dz|.\ee

Suppose that $(X,d)$ is  a proper space equipped with a Borel-regular measure $\mu$ such that the measures of non-empty open bounded sets are positive and finite. We define the {\it spherical measure} $\mu_a$ associated to the point $a\in X$ as
$$\mu_a(A)=\int_{A\setminus \{\infty\}}\frac{1}{\mu({\mathbb{B}}(a,1+d(a,z)))^2}d\mu(z).$$

\subsection{Gromov hyperbolic spaces}\label{s-g} In this subsection, we give some basic information about Gromov hyperbolic spaces, for which we refer to standard references \cite{BS,BrHa,BuSc,Gr87,Vai-0}.

We begin with the definition of $\delta$-hyperbolic spaces. We say that a metric space $(X,d)$ is {\it Gromov hyperbolic}, if there is a constant $\delta\geq 0$ such that it satisfies the following $\delta$-inequality
$$(x|y)_w\geq \min\{(x|z)_w,(z|y)_w\}-\delta$$
for all $x,y,z,w\in X$, where $(x|y)_w$ is the {\it Gromov product} with respect to $w$ defined by
$$(x|y)_w=\frac{1}{2}[d(x,w)+d(y,w)-d(x,y)].$$

\begin{definition} Suppose that $(X, d)$ is a Gromov $\delta$-hyperbolic metric space for some constant $\delta\geq 0$.
\begin{enumerate}
\item
A sequence $\{x_i\}$ in $X$ is called a {\it Gromov sequence} if $(x_i|x_j)_w\rightarrow \infty$ as $i,$ $j\rightarrow \infty.$
\item
Two such sequences $\{x_i\}$ and $\{y_j\}$ are said to be {\it equivalent} if $(x_i|y_i)_w\rightarrow \infty$.
\item
The {\it Gromov boundary} or {\it the boundary at infinity} $\partial_\infty X$ of $X$ is defined to be the set of all equivalence classes.
\item
For $a\in X$ and $\eta\in \partial_\infty X$, the Gromov product $(a|\eta)_w$ of $a$ and $\eta$ is defined by
$$(a|\eta)_w= \inf \big\{ \liminf_{i\rightarrow \infty}(a|b_i)_w:\; \{b_i\}\in \eta\big\}.$$
\item
For $\xi,$ $\eta\in \partial_\infty X$, the Gromov product $(\xi|\eta)_w$ of $\xi$ and $\eta$ is defined by
$$(\xi|\eta)_w= \inf \big\{ \liminf_{i\rightarrow \infty}(a_i|b_i)_w:\; \{a_i\}\in \xi\;\;{\rm and}\;\; \{b_i\}\in \eta\big\}.$$
\end{enumerate}
\end{definition}

Let $X$ be a proper, geodesic $\delta$-hyperbolic space, and let $w\in X$. We say that $X$ is {\it $K$-roughly starlike} with respect to  $w$ if for each $x\in X$ there is some point $\eta\in\partial_\infty X$ and a geodesic ray $\gamma=[w,\eta]$ emanating from $w$ to $\eta$ with
$$\dist(x,\gamma)\leq K.$$
This concept was introduced by Bonk, Heinonen and Koskela \cite{BHK}, see also \cite{BS}. They proved that bounded uniform spaces and every Gromov hyperbolic domain  in $\IR^n$ are roughly starlike.

For $0<\varepsilon<\min\{1,\frac{1}{5\delta}\}$, define
$$\rho_{w,\varepsilon}(\xi,\zeta) =e^{-\varepsilon(\xi|\zeta)_w}$$
for all $\xi,\zeta$ in the Gromov boundary $\partial_\infty X$ of $X$ with convention $e^{-\infty}=0$.

We now define
$$d_{w,\varepsilon}(\xi,\zeta):=\inf \Big\{\sum_{i=1}^{n} \rho_{w,\varepsilon} (\xi_{i-1},\xi_i):n\geq 1,\xi=\xi_0,\xi_1,...,\xi_n=\zeta\in \partial_\infty X\Big\}.$$
Then $(\partial_\infty X,d_{w,\varepsilon})$ is a metric space with
$$\rho_{w,\varepsilon}/2\leq d_{w,\varepsilon}\leq \rho_{w,\varepsilon},$$
and we call $d_{w,\varepsilon}$ the {\it Bourdon metric} of $\partial_\infty X$ based at $w$ with the parameter $\varepsilon$.

Following \cite{BuSc}, we say that $b:X\to \mathbb{R}$ is a {\it Busemann function} based at $\xi$, denoted by $b\in \mathcal{B}(\xi)$, if for some $w\in X$, we have
$$b(x)=b_{\xi,w}(x)=b_\xi(x,w)=(\xi|w)_x-(\xi|x)_w\;\;\;\;\;\;\;\;\;\;\;\;\mbox{for}\;x\in X.$$

We next define the Gromov product of $x,y\in X$ based at the Busemann function $b=b_{\xi,w}\in \mathcal{B}(\xi)$ by
$$(x|y)_b=\frac{1}{2}(b(x)+b(y)-d(x,y)).$$
Similarly, for $x\in X$ and $\eta\in \partial_\infty X\setminus\{\xi\}$, the Gromov product $(x|\eta)_b$ of $x$ and $\eta$ is defined by
$$(x|\eta)_b= \inf \big\{ \liminf_{i\rightarrow \infty}(x|z_i)_b:\; \{z_i\}\in \eta\big\}.$$
For points $\xi_1,\xi_2\in \partial_\infty X\setminus\{\xi\}$, we define their Gromov product based at $b$ by
$$(\xi_1|\xi_2)_b=\inf\big\{\liminf_{i\to\infty} (x_i|y_i)_b: \{x_i\}\in\xi_1 , \{y_i\}\in\xi_2\}.$$

According to \cite[$(3.2)$ and Example $3.2.1$]{BuSc}, we know that
\be\label{z0} (x|y)_b\doteq_{10\delta} (x|y)_{w,\xi}:=(x|y)_w-(\xi|x)_w-(\xi|y)_w,\ee
where $x\doteq_{a}y$ means that $|x-y|\le a$ for some real number $a\geq 0$.

It follows from \cite[Proporsition $3.2.3$]{BuSc} that $(x|y)_b,(y|z)_b,(x|z)_b$ form a $22\delta$-triple for every $x,y,z\in X$.

Now we recall the definition of the {\it Hamenst\"adt metric} of $\partial_\infty X$ based at $\xi$ or a Busemann function $b=b_{\xi,w}\in \mathcal{B}(\xi)$. For $\varepsilon>0$ with $e^{22\varepsilon\delta}\leq 2$, define
$$\rho_{b,\varepsilon}(\xi_1,\xi_2)= e^{-\varepsilon(\xi_1|\xi_2)_b}\;\;\;\;\;\;\;\mbox{for all}\;\xi_1,\xi_2\in \partial_\infty X.$$
Then for $i=1,2,3$ with $\xi_i\in \partial_\infty X\setminus\{\xi\}$, we have
$$\rho_{b,\varepsilon}(\xi_1,\xi_2)\leq e^{22\varepsilon\delta} \max\{\rho_{b,\varepsilon}(\xi_1,\xi_3),\rho_{b,\varepsilon}(\xi_3,\xi_2)\}.$$
That is, $\rho_{b,\varepsilon}$ is a $K'$-quasi-metric on $\partial_\infty X\setminus\{\xi\}$ with $K'=e^{22\varepsilon\delta}\leq 2$. We now define
$$\sigma_{b,\varepsilon}(x,y):=\inf \Big\{\sum_{i=1}^{n} \rho_{b,\varepsilon} (x_{i-1},x_i):n\geq 1,x=x_0,x_1,...,x_n=y\in\partial_\infty X\setminus\{\xi\} \Big\}.$$
By \cite[Lemma $3.3.3$]{BuSc}, we see that $(\partial_\infty X\setminus\{\xi\}, \sigma_{b,\varepsilon})$ is a metric space with
$$\rho_{b,\varepsilon}/2\leq \sigma_{b,\varepsilon}\leq \rho_{b,\varepsilon}.$$
Then $\sigma_{b,\varepsilon}$ is called the {\it Hamenst\"adt metric} on the punctured space $\partial_\infty X\setminus\{\xi\}$ based at $\xi$ with parameter $\varepsilon$.

To conclude this part, we note that $\partial_\infty X$ equipped with any Bourdon metric is bounded. However, the punctured space $\partial_\infty X\setminus\{\xi\}$ equipped with any Hamenst\"adt metric $\sigma_{b,\varepsilon}$ is unbounded.


\section{Proofs of the main results}\label{sec-3}

\subsection{} In this subsection, we shall give the proof of Theorem \ref{thm-1}. To this end, we introduce some necessary results. The first one is known as Assouad's embedding Theorem.

\begin{theorem}\label{Thm-2} $($\cite[Theorem $8.1.1$]{BuSc}$)$
Let $(Z,d)$ be a doubling metric space. Then for every $s\in(0,1)$ there is a bilipschitz embedding $\varphi:(Z,d^s) \to \mathbb{R}^N$, where $N\in \mathbb{N}$ depends only on $s$ and the doubling constant of the metric $d$.
\end{theorem}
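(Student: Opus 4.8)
The plan is to reproduce the classical proof of Assouad's embedding theorem: this is \cite[Theorem $8.1.1$]{BuSc}, so no new idea is needed, and I sketch the standard argument here for completeness. First I would make a few harmless reductions: $d^s$ really is a metric for $0<s<1$ because $t\mapsto t^s$ is subadditive on $[0,\infty)$, and we may assume $(Z,d)$ complete, since the completion is doubling with a comparable constant and a bilipschitz embedding of it restricts to $Z$ (one may equally work in $Z$ directly). The strategy is then to build $\varphi$ as a sum, over dyadic scales $2^{-k}$ ($k\in\mathbb{Z}$), of uniformly Lipschitz ``bump maps'' adapted to a hierarchy of nets; the doubling hypothesis is what keeps the target dimension finite, while the snowflake exponent $s<1$ is what makes the series converge and, crucially, supplies the lower bilipschitz bound.

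The combinatorial skeleton: for each $k\in\mathbb{Z}$ fix a maximal $2^{-k}$-separated set $\mathcal{N}_k\subset Z$, which is then also $2^{-k}$-dense. The doubling property yields a constant $M=M(C_D)$ with $\#\big(\mathcal{N}_k\cap \mathbb{B}(x,20\cdot 2^{-k})\big)\le M$ for every $x$, hence, by a greedy colouring of the ``conflict graph'', a partition of $\mathcal{N}_k$ into $L=L(C_D)$ colour classes each of which is $10\cdot 2^{-k}$-separated. The second ingredient is a window $J=J(s)\in\mathbb{N}$, to be fixed large in terms of $s$ only; its role is that two scales in the same residue class modulo $J$ differ by a fixed ratio at least $2^{J}$, so their bump contributions produce a small geometric factor $2^{-(1-s)J}$ that the estimates below will absorb.

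For a colour $c\in\{1,\dots,L\}$ and a residue $r\in\{0,\dots,J-1\}$, consider, for each $k\equiv r\pmod J$, the bumps $z\mapsto\psi_{k,p}(z)=\big(2\cdot 2^{-k}-d(z,p)\big)_+$ over $p\in\mathcal{N}_k$ of colour $c$; by the $10\cdot 2^{-k}$-separation these have pairwise disjoint supports. Attaching to each bump a vector from a fixed finite configuration of unit vectors (Assouad's two-dimensional ``rotation'' device, which prevents contributions from different scales from cancelling; with a sufficiently fine colouring one may even take these vectors equal), weighting scale $k$ by $2^{-k(s-1)}$, and summing over $k\equiv r\pmod J$, one obtains $\varphi_{r,c}\colon Z\to\mathbb{R}^{n_0}$ with $n_0$ an absolute constant. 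Put $\varphi=(\varphi_{r,c})_{r,c}$; then $N=J(s)\cdot L(C_D)\cdot n_0$ depends only on $s$ and $C_D$, as claimed. The \emph{upper} (Lipschitz) bound is routine: each $\psi_{k,p}$ is $1$-Lipschitz for $d$ and supported on a set of $d$-diameter $\le 4\cdot 2^{-k}$, hence $O(2^{-k(1-s)})$-Lipschitz for $d^s$; after the weight $2^{-k(s-1)}$ it is $O(1)$-Lipschitz for $d^s$, and summing a fixed residue class is a convergent geometric series, so $\varphi$ is Lipschitz for $d^s$.

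The \emph{lower} bound is the heart of the matter and the step I expect to be the real obstacle. Given $x\ne y$, set $\rho=d(x,y)$ and pick $k_0$ with $2^{-k_0}\asymp\rho$, chosen precisely enough that some $p\in\mathcal{N}_{k_0}$ has $d(x,p)\le 2^{-k_0}$ (so $\psi_{k_0,p}(x)\ge 2^{-k_0}$) while $d(y,q)\ge 2\cdot 2^{-k_0}$ for \emph{every} colour-$c_0$ net point $q$ at scale $k_0$, so that $y$ lies outside all colour-$c_0$ scale-$k_0$ bumps. Then the scale-$k_0$ part of the block $\varphi_{r_0,c_0}$ ($r_0=k_0\bmod J$) separates $x$ from $y$ by at least $c(s)\,\rho^{s}$, and one must show this is not swamped by the other scales summed into that block: the scales $k<k_0$ in the class (which are $\le k_0-J$) are controlled by the $d$-Lipschitz bound, the scales $k>k_0$ (which are $\ge k_0+J$) by the amplitude bound, and both sums carry the factor $2^{-(1-s)J}$ (resp. $2^{-sJ}$), so fixing $J=J(s)$ large makes their total at most half the main term. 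Hence $|\varphi(x)-\varphi(y)|\ge|\varphi_{r_0,c_0}(x)-\varphi_{r_0,c_0}(y)|\ge\frac12 c(s)\,\rho^{s}$. It is precisely this non-cancellation bookkeeping — pinning down $k_0$ and $p$, ruling out $y$ lying in a competing same-colour bump, and beating the tails with the window $J$ — that carries the weight of the argument; the reductions, the counting from doubling, and the Lipschitz upper bound are all straightforward.
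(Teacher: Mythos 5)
The paper offers no proof of this statement: it is Assouad's embedding theorem, imported verbatim from \cite[Theorem 8.1.1]{BuSc} and used as a black box in the proof of Lemma \ref{lem-2}, so there is no in-paper argument to compare yours against. Your sketch is a faithful outline of the standard proof (dyadic nets, a bounded colouring from doubling, weighted bumps summed over a lacunary family of scales, the snowflake exponent $s<1$ supplying convergence and the lower bound), and the genuinely delicate point --- non-cancellation at the critical scale $2^{-k_0}\asymp d(x,y)$ --- is correctly identified and handled. The only imprecision is in the upper bound: a sum over infinitely many scales of uniformly $O(1)$-Lipschitz (for $d^s$) maps is not automatically Lipschitz, so the Lipschitz estimate also needs the split at $2^{-k}\asymp d(x,y)$ (amplitude bound for fine scales, $d$-Lipschitz bound for coarse scales) that you invoke two sentences later for the lower bound; with that reading the outline is complete.
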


Secondly, the following auxiliary result concerns the quasim\"obius invariance of doubling metric spaces. We note that this lemma has been proved by Heer \cite{Heer}. His proof was based on a recent work of the third author and Shanmugalingam \cite{LS}. However, our arguments are quite different and based on Assouad's embedding Theorem. We think it is interesting and thus present a proof here.

\begin{lemma}\label{lem-2}
Let $(X,d)$ and $(X',d')$ be  metric spaces, and let $f:X\to X'$ be a quasim\"{o}bius embedding. If $X'$ is a doubling metric space, then $X$ is doubling.
\end{lemma}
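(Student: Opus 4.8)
The plan is to reduce the doubling property of $X$ to that of $X'$ by a local argument, exploiting the fact that a quasimöbius embedding behaves like a quasisymmetric embedding once we restrict attention to a bounded piece of the space and ``anchor'' a base point. First I would recall the standard fact (see, e.g., V\"ais\"al\"a's work on quasimöbius maps) that if $f\colon X\to X'$ is $\theta$-quasimöbius and we fix a point $p\in X$ together with a bounded subset, then on that subset $f$ is $\eta$-quasisymmetric with $\eta$ depending only on $\theta$ and on the relevant metric ratios; equivalently, one passes to the one-point completions and uses that quasimöbius maps fixing three points are quasisymmetric. Since doubling is a local and scale-invariant property — $X$ is doubling iff there is $N$ so that every ball $\mathbb{B}(x,2r)$ is covered by $N$ balls of radius $r$ — it suffices to bound, uniformly, the number of $r$-separated points inside an arbitrary ball $\mathbb{B}(x_0,2r)\subset X$.

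The key steps, in order, would be: (1) Fix $x_0\in X$ and $r>0$, and let $E=\{x_1,\dots,x_m\}\subset \mathbb{B}(x_0,2r)$ be a maximal $r$-separated set; our goal is a bound on $m$ independent of $x_0,r$. (2) Choose an auxiliary point $q\in X$ with $d(x_0,q)$ comparable to $r$ (if $X$ has points far from $x_0$ this is automatic; if $\diam X\lesssim r$ then $X$ itself is bounded and one can instead use that a bounded quasimöbius image is quasisymmetric on all of $X$). (3) Apply $f$ and use the quasimöbius inequality on quadruples drawn from $\{q, x_i, x_j, x_0\}$: the cross-ratios $r(q,x_i,x_j,x_0)$ are controlled above and below because the $x_i$ lie in an annulus-type region around $x_0$ at scale $r$, and $q$ sits at a definite scale. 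This forces the image points $x_i'=f(x_i)$ to be $\eta$-quasisymmetrically distributed, hence to lie in a ball of controlled radius in $X'$ and to be mutually separated by a definite fraction of that radius. (4) Invoke the doubling property of $X'$: a ball in $X'$ can contain only $N'$-many points that are $c$-separated at the ball's scale, with $N'$ depending only on the doubling constant of $X'$ and on $c=c(\theta)$. This bounds $m$, proving $X$ is doubling.

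An alternative, perhaps cleaner, route uses Theorem~\ref{Thm-2} (Assouad embedding) directly, paralleling the remark after Theorem~\ref{thm-1}: since $X'$ is doubling, for a fixed small $s\in(0,1)$ there is a bilipschitz embedding $\varphi\colon (X',d'^{\,s})\to\mathbb{R}^N$, so $(X',d'^{\,s})$ is doubling with constant inherited from $\mathbb{R}^N$; then one checks that precomposing with the quasimöbius $f$ and snowflaking transports the doubling bound back to $X$ via the local quasisymmetry argument of steps (1)--(4), now run once for the snowflaked metric and transferred to $d$ using that snowflaking a metric does not affect whether it is doubling. Either way, the heart of the matter is the same.

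The main obstacle — and the step I would spend the most care on — is step (2)--(3): making the passage ``quasimöbius $\Rightarrow$ locally quasisymmetric'' quantitative and uniform over all balls $\mathbb{B}(x_0,2r)$, including the degenerate situation where $X$ is bounded so that no anchor point $q$ at a larger scale exists, and the situation where points of $E$ are much closer to each other than to $\partial\mathbb{B}(x_0,2r)$ so that some cross-ratios degenerate. I expect to handle this by splitting into the bounded and unbounded cases for $X$ and, in the bounded case, using the completion and the three-point normalization of quasimöbius maps; the bookkeeping of which metric quantities stay comparable under $f$ is routine but must be done with care so that all constants depend only on $\theta$ and the doubling constant of $X'$.
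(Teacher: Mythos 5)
Your reduction of doubling to a counting bound for separated sets is reasonable, but step (3) contains a genuine gap that more careful bookkeeping will not close. You claim that fixing an anchor $q$ at a definite scale from $x_0$ \emph{in the domain} and bounding the cross ratios of quadruples from $\{q,x_i,x_j,x_0\}$ forces the images $x_i'$ to lie in a ball of controlled radius and to be mutually separated by a definite fraction of that radius. This is false. Take $X=X'=\mathbb{R}^n\setminus\{p\}$ and let $f$ be the inversion $x\mapsto p+(x-p)/|x-p|^{2}$, which preserves the cross ratio exactly and is therefore $\theta$-quasim\"obius with $\theta(t)=t$. If $p\in{\mathbb{B}}(x_0,2r)$ and the $r$-separated set contains a point $y$ with $|y-p|=s\ll r$ together with points $z_1,z_2$ at distance about $r$ from $p$, then $|f(z_1)-f(z_2)|\approx 1/r$ while $|f(y)-f(z_1)|\approx 1/s$: the image configuration has diameter at least $1/s$ but contains pairs separated only by about $1/r$, and the ratio $s/r$ is not controlled by $\theta$. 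A single application of the doubling property of $X'$ therefore does not bound the cardinality. The obstruction is precisely the image-side half of V\"ais\"al\"a's three-point normalization: to upgrade quasim\"obius to quasisymmetric one needs normalizing points that are well separated \emph{in the image} as well, and no choice of $q$ in the domain can guarantee this (in the example, $f(q)$ lands at scale $1/r$ from $p$, far inside the image configuration). The same defect affects your bounded case: a quasim\"obius map of a bounded space need not be quasisymmetric when its image is unbounded, as the same inversion shows.

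The repair has to be a \emph{global} normalization of the target rather than a local one, and this is exactly where the paper's use of Assouad's theorem is essential rather than cosmetic. After a bilipschitz embedding of a snowflake of $X'$ into $\mathbb{R}^N$, one has honest M\"obius transformations of $\mathbb{R}^N\cup\{\infty\}$ available: if $X$ is unbounded one post-composes, when necessary, with the inversion $u(x)=x/|x|^{2}$ so that the resulting quasim\"obius map fixes $\infty$ and is then \emph{globally} quasisymmetric by V\"ais\"al\"a's theorem; if $X$ is bounded one passes to the spherical metric on $\mathbb{R}^N\cup\{\infty\}$ so that both spaces are bounded and the map is again quasisymmetric. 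Only at that point does the Tukia--V\"ais\"al\"a theorem (or your counting argument, which is correct for quasisymmetric maps) transfer the doubling property back to $X$. Your ``alternative route'' invokes Assouad but still proposes to run the local quasisymmetry argument afterwards, so it inherits the same gap; the point of going to $\mathbb{R}^N$ is not to snowflake but to gain access to the inversion and the chordal metric.
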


\begin{proof} Without loss of generality, we may assume that $X'=f(X)$ and $f$ is a homeomorphism because the subspace of a doubling metric space is also doubling, see \cite[Remark 2.8]{TV}. Then according to Theorem \ref{Thm-2}, we see that there is a quasisymmetric embedding
$$\varphi:(X',d') \to Z:=\varphi(X')\subset \mathbb{R}^N,$$
where $N\in \mathbb{N}$ depends only on the doubling constant of the metric $d'$. Moreover, we obtain a quasim\"{o}bius homeomorphism
$$g=\varphi\circ f:X\to Z.$$
In order to show that $X$ is doubling, we consider two cases.

\begin{case} $X$ is unbounded.\end{case}
If $g(x)\to\infty$ as $x\to \infty$, by \cite[Theorem $3.10$]{Vai-5}, we see that $g$ is quasisymmetric and thus the claim follows from \cite[Theorem 2.10]{TV}.

If $g(x)\to p\neq\infty$ as $x\to \infty$,  we may assume without loss of generality that $g(\infty)=0\in Z$. Let $$u(x)=\frac{x}{|x|^2}$$
be the reflection about the unit sphere centered at the origin in $\mathbb{R}^N\cup \{\infty\}$. Clearly, $u$ is a M\"{o}bius transformation. It follows that $u\circ g:X\to u(Z)$ is quasim\"{o}bius with $u\circ g(x)\to \infty$ as $x\to \infty$. Again by \cite[Theorem 3.10]{Vai-5} we see that $u\circ g$ is quasisymmetric. Therefore, it follows from \cite[Theorem 2.10]{TV} that $X$ is a doubling metric space.

\begin{case} $X$ is bounded. \end{case}
In this case, we consider the spherical metric $\sigma$ on $\mathbb{R}^N\cup \{\infty\}$ which is determined by the length element
$$|dz|_\sigma=\frac{2|dz|}{1+|z|^2},$$
where $|dz|$ is the Euclidean length element and $|z|$ is the Euclidean norm of a point $z\in \mathbb{R}^N$. Then $g:X\to (Z,\sigma)$ is a quasim\"{o}bius map between two bounded metric spaces, which is actually quasisymmetric. By \cite[Theorem 2.10]{TV}, we see that $X$ is doubling.
\end{proof}


Next, we show that the Gromov boundary of a hyperbolic space equipped with any Bourdon metric and Hamenst\"adt metic are quasim\"obius equivalent.

\begin{lemma}\label{lem-3} Let $X$ be a Gromov $\delta$-hyperbolic space and $\partial_\infty X$ its Gromov boundary. Then the identity map $(\partial_\infty X\setminus\{\xi\}, d_{w,\varepsilon})\to (\partial_\infty X\setminus\{\xi\}, \sigma_{b,\varepsilon'})$ is $\theta$-quasim\"obius with $\theta$ depending only on $\delta,\varepsilon$ and $\varepsilon'$, where $d_{w,\varepsilon}$ is the Bourdon metric based at $w\in X$ with parameter $\varepsilon>0$ and
$\sigma_{b,\varepsilon'}$ is the H\"amenstadt metric based at the Busemann function $b=b_{\xi,o}$ with parameter $\varepsilon'>0$ for $o\in X$ and $\xi\in \partial_\infty X$.
\end{lemma}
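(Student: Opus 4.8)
The plan is to compare the two quasi-metrics $\rho_{w,\varepsilon}$ and $\rho_{b,\varepsilon'}$ on $\partial_\infty X\setminus\{\xi\}$ point-by-point through their defining Gromov products, and then deduce the cross-ratio distortion estimate. The starting point is the identity $(\ref{z0})$, which says $(\eta|\zeta)_b\doteq_{10\delta}(\eta|\zeta)_w-(\xi|\eta)_w-(\xi|\zeta)_w$ for $\eta,\zeta\in\partial_\infty X\setminus\{\xi\}$ (the inequality extends from points of $X$ to boundary points by the usual $\liminf$ over representative sequences, picking up only a bounded additive error in $\delta$). Exponentiating, this yields a comparison of the form
\be
\rho_{b,\varepsilon'}(\eta,\zeta)\asymp \Big(\rho_{w,\varepsilon}(\eta,\zeta)\Big)^{\varepsilon'/\varepsilon}\cdot e^{\varepsilon'(\xi|\eta)_w}\,e^{\varepsilon'(\xi|\zeta)_w},
\ee
with multiplicative constants depending only on $\delta,\varepsilon,\varepsilon'$. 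Writing $\lambda(\eta):=e^{\varepsilon'(\xi|\eta)_w}=\big(\rho_{w,\varepsilon}(\xi,\eta)\big)^{-\varepsilon'/\varepsilon}$, this is precisely the statement that $\rho_{b,\varepsilon'}$ is, up to bounded multiplicative error, the \emph{$\lambda$-weighted} version (an "inversion'') of the power metric $\rho_{w,\varepsilon}^{\varepsilon'/\varepsilon}$ based at the puncture point $\xi$.

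From here the cross-ratio computation is essentially formal. Given four distinct points $x,y,z,v\in\partial_\infty X\setminus\{\xi\}$, form the cross ratio with respect to $\sigma_{b,\varepsilon'}$. Since $\rho_{w,\varepsilon}/2\le d_{w,\varepsilon}\le\rho_{w,\varepsilon}$ and $\rho_{b,\varepsilon'}/2\le\sigma_{b,\varepsilon'}\le\rho_{b,\varepsilon'}$, we may replace both metrics by their quasi-metric representatives at the cost of a bounded multiplicative factor in the cross ratio. Substituting the displayed comparison, each occurrence of $\rho_{b,\varepsilon'}$ contributes one power factor and two weights $\lambda(\cdot)$; when one forms the cross ratio $\dfrac{\sigma(x,z)\,\sigma(y,v)}{\sigma(x,y)\,\sigma(z,v)}$ \emph{every} weight $\lambda(x),\lambda(y),\lambda(z),\lambda(v)$ appears exactly once in the numerator and once in the denominator, so all the weights cancel. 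What remains is
\be
r_{\sigma_{b,\varepsilon'}}(x,y,z,v)\asymp \big(r_{d_{w,\varepsilon}}(x,y,z,v)\big)^{\varepsilon'/\varepsilon},
\ee
again with constant depending only on $\delta,\varepsilon,\varepsilon'$. This is exactly the assertion that the identity map is $\theta$-quasim\"obius with $\theta(t)=C\,t^{\varepsilon'/\varepsilon}$ for a suitable $C=C(\delta,\varepsilon,\varepsilon')$.

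The main technical obstacle is the first step: justifying the comparison $(\ref{z0})$ at the level of boundary points rather than points of $X$, and more importantly handling the $\liminf$'s in the various boundary Gromov products so that exponentiation produces genuine two-sided multiplicative bounds rather than one-sided ones. This requires the standard fact that for Gromov products between boundary points, different choices of representative sequences change the value by at most a bounded amount depending on $\delta$ (the $22\delta$-triple inequality and $(3.2)$ of \cite{BuSc} quoted in the excerpt are the relevant tools), so that $\rho_{b,\varepsilon'}(\eta,\zeta)$ and $e^{-\varepsilon'[(\eta|\zeta)_w-(\xi|\eta)_w-(\xi|\zeta)_w]}$ agree up to a factor controlled by $e^{10\varepsilon'\delta}$. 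Once this bookkeeping is in place, the rest is the cancellation argument above. A secondary point to check is that the identity map is well-defined and bijective on $\partial_\infty X\setminus\{\xi\}$ as a set, which is immediate, and that the cross-ratio distortion bound automatically upgrades to a quasim\"obius \emph{homeomorphism} by the standard correspondence between quasim\"obius cross-ratio control and topological quasim\"obius maps.
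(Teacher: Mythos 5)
Your argument is correct and is essentially the paper's own proof: both reduce to interior points via the standard facts about boundary Gromov products, apply (\ref{z0}), and exploit the cancellation of the $(\xi|\cdot)$ correction terms in the cross ratio --- you do this multiplicatively via the weights $\lambda$, the paper additively via the alternating sum of Gromov products, which it also notes is independent of the basepoint ($w$ versus $o$). One small imprecision: since $b=b_{\xi,o}$, the right-hand side of (\ref{z0}) involves products based at $o$ rather than $w$, so your displayed pointwise comparison should carry weights $e^{\varepsilon'(\xi|\cdot)_o}$ (otherwise its constant also depends on $d(o,w)$); this is harmless here because both the weights and any constant factors cancel in the cross ratio.
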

\begin{proof}
For any distinct points $x_i\in \partial_\infty X\setminus\{\xi\}$, $i=1,2,3,4$,  by \cite[Lemmas 2.2.2 and 3.2.4]{BuSc} we may assume that all of them lie in $X$.  Thus by $(\ref{z0})$, we obtain
$$(x_1|x_2)_w+(x_3|x_4)_w-(x_1|x_3)_w-(x_2|x_4)_w$$
\beq\nonumber
&=& (x_1|x_2)_o+(x_3|x_4)_o-(x_1|x_3)_o-(x_2|x_4)_o
\\ \nonumber &\doteq_{40\delta}& (x_1|x_2)_b+(x_3|x_4)_b-(x_1|x_3)_b-(x_2|x_4)_b.
\eeq
Therefore, we have
\beq\nonumber
\frac{\sigma_{b,\varepsilon'}(x_1,x_3)\sigma_{b,\varepsilon'}(x_2,x_4)}{\sigma_{b,\varepsilon'}(x_1,x_2)\sigma_{b,\varepsilon'}(x_3,x_4)}&\leq& 4e^{-\varepsilon'[(x_1|x_2)_b+(x_3|x_4)_b-(x_1|x_3)_b-(x_2|x_4)_b]}
\\ \nonumber &\leq& 4e^{-40\varepsilon'\delta}e^{-\varepsilon'[(x_1|x_2)_w+(x_3|x_4)_w-(x_1|x_3)_w-(x_2|x_4)_w]}
\\ \nonumber &\leq& 4e^{-40\varepsilon'\delta}4^{\varepsilon'/\varepsilon}
\Big[\frac{d_{w,\varepsilon}(x_1,x_3)d_{w,\varepsilon}(x_2,x_4)}{d_{w,\varepsilon}(x_1,x_2)d_{w,\varepsilon}(x_3,x_4)}\Big]^{\varepsilon'/\varepsilon}.
\eeq
This proves Lemma \ref{lem-3}.
\end{proof}

\emph{Proof of Theorem \ref{thm-1}.} This follows from Lemmas \ref{lem-2} and \ref{lem-3}.\qed

\subsection{} In this subsection,  we will prove Theorem \ref{thm-2} with the aid of the following auxiliary results. It seems that Lemma \ref{lem-4} below is well known, but we  failed to find a reference for its proof. As we will use this to prove Theorem \ref{thm-2}, we give a proof here.

\begin{lemma}\label{lem-4}  Let $f:(X,d)\to (Y,d')$ be a rough $(\lambda,c)$-quasi-isometry between two proper geodesic Gromov hyperbolic spaces. If $X$ is $K$-roughly starlike with respect to some point $w\in X$, then $Y$ is $K'$-roughly starlike with respect to the point $f(w)$.
\end{lemma}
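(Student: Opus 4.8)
The plan is to transport roughly starlike from $X$ to $Y$ by pushing geodesic rays forward along $f$, correcting them with the stability of quasigeodesics in Gromov hyperbolic spaces. First I would fix a point $y\in Y$; since $f(X)$ is $c$-cobounded, there is some $x\in X$ with $d'(f(x),y)\le c$. Using that $X$ is $K$-roughly starlike with respect to $w$, choose $\eta\in\partial_\infty X$ and a geodesic ray $\gamma=[w,\eta]$ with $\dist(x,\gamma)\le K$. The image $f\circ\gamma$ is a $(\lambda,c)$-quasigeodesic ray in $Y$ starting within distance $c$ of $f(w)$ (after passing to the arc-length reparametrization, which only changes the constants in a controlled way). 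Because $Y$ is a proper geodesic Gromov hyperbolic space, this quasigeodesic ray converges to a boundary point $\eta'\in\partial_\infty Y$, and by the Morse/stability lemma for quasigeodesics (the $\lambda,c$-quasigeodesic stability in $\delta$-hyperbolic spaces, see \cite{BrHa} or \cite{BS}), $f\circ\gamma$ stays within some Hausdorff distance $H=H(\delta',\lambda,c)$ of an honest geodesic ray $\gamma'$ from $f(w)$ to $\eta'$.

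Next I would estimate $\dist(y,\gamma')$. We have $d'(y,f(x))\le c$; since $\dist(x,\gamma)\le K$, pick $z\in\gamma$ with $d(x,z)\le K$, so $d'(f(x),f(z))\le \lambda K+c$; and $f(z)\in f\circ\gamma$ lies within $H$ of $\gamma'$. The triangle inequality then gives $\dist(y,\gamma')\le c+(\lambda K+c)+H=:K'$. Since $y\in Y$ was arbitrary, $Y$ is $K'$-roughly starlike with respect to $f(w)$, and $K'$ depends only on $\lambda,c,K$ and the hyperbolicity constants of $X$ and $Y$.

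There is one point of care that I would address explicitly: a rough quasi-isometry need not be continuous, so $f\circ\gamma$ is a priori only a rough (i.e. possibly discontinuous) quasigeodesic; one either works with the standard notion of a $(\lambda,c)$-quasigeodesic as a coarse map and invokes the version of the stability lemma valid for such maps, or one first replaces $f\circ\gamma$ by a continuous path with the same endpoints and comparable quasigeodesic constants (connecting $f(\gamma(n))$ to $f(\gamma(n+1))$ by geodesic segments, each of bounded length $\le \lambda+c$). Either way the conclusion is the same.

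The main obstacle is making precise that $f\circ\gamma$ actually defines a point $\eta'$ of $\partial_\infty Y$ and that the stability lemma applies uniformly; this is the only place where properness and geodesicity of $Y$ are used. Once that is granted, the rest is a short triangle-inequality computation as above. I would therefore structure the proof as: (1) reduce to the continuous quasigeodesic case; (2) invoke stability to get $\gamma'$ and $\eta'$ with explicit Hausdorff bound $H$; (3) combine the three estimates $d'(y,f(x))\le c$, $d'(f(x),f(z))\le\lambda K+c$, and $\dist(f(z),\gamma')\le H$ to conclude $K'$-rough starlikeness.
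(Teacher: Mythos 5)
Your proposal is correct and follows essentially the same route as the paper: cobounded image to find a preimage within $c$, push the geodesic ray $[w,\eta]$ forward, use stability of (rough) quasigeodesics to replace $f\circ\gamma$ by a genuine geodesic ray from $f(w)$ to the induced boundary point, and conclude with the same triangle-inequality estimate $K'=2c+\lambda K+H$. Your explicit remark about the possible discontinuity of $f\circ\gamma$ is a point the paper handles implicitly by citing the extended stability theorem for rough quasi-isometric rays (V\"ais\"al\"a, Theorem 6.32), but it is the same argument.
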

\begin{proof}  Since $f:X\to Y$ is a rough $(\lambda,c)$-quasi-isometry, we see that for any $x'\in Y$ there is some point $x\in X$ such that
\be\label{l-2} d'(f(x),x')\leq c.\ee
By the assumption that  $X$ is $K$-roughly starlike with respect to $w\in X$, we see that there is a geodesic ray $\gamma$ emanating from $w$ to some $\xi\in \partial_\infty X$ and satisfying
\be\label{l-3}\dist(x,\gamma)\leq K.\ee
 Moreover, we see from \cite[Proposition 6.3]{BS} that there is an extension $f:\partial_\infty X\to \partial_\infty Y$ of $f$ with
$f(\xi)=\xi'\in \partial_\infty Y$. Then take another geodesic ray $\gamma'$ emanating from $w'=f(w)$ to $\xi'$. Because $f(\gamma)$ is a rough quasi-isometric ray, it follows from the extended stability theorem \cite[Theorem 6.32]{Vai-0}  that the Hausdorff distance
\be\label{l-4} \dist_\mathcal{H}(f(\gamma), \gamma')\leq C.\ee
Hence we obtain from (\ref{l-2}), (\ref{l-3}) and (\ref{l-4}) that
$$\dist(x',\gamma')\leq c+C+\lambda K+c=K',$$
as desired.
\end{proof}

We now pause to recall certain auxiliary definitions that we shall need. Suppose that $(X,d)$ is a $C$-annular quasiconvex, geodesic and proper metric space, and $\Omega\subsetneq X$ is a domain. For any $x\in\Omega$, denote $d_{\Omega}(x)=\dist(x,\partial \Omega)$. Let $0<\lambda\leq 1/2$. Following \cite[Chapter 7]{BHK} or \cite{HSX}, a point $x_0$ in $\Omega$ is said to be a {\it $\lambda$-annulus point} of $\Omega$, if there is a point $a\in\partial\Omega$ such that  $$t=d(x_0,a)=d_{\Omega}(x_0),$$
the annulus ${\mathbb{B}}(a,t/\lambda)\setminus \overline{{\mathbb{B}}}(a,\lambda t)$ is contained in $\Omega$.

If $x_0$ is not a $\lambda$-annulus point of $\Omega$, then  it is said to be a {\it $\lambda$-arc point} of $\Omega$. Then we have the following lemma.

\begin{lemma}\label{z-0}  Suppose that $(X,d)$ is a $C$-annular quasiconvex, geodesic and proper metric space, and $\Omega\subsetneq X$ is a bounded $\delta$-hyperbolic domain. Then $(\Omega,k)$ is $K$-roughly starlike with respect to some point $w\in \Omega$, where $K$ is a constant depending only on $C$ and $\delta$.
\end{lemma}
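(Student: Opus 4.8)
\textbf{Proof strategy for Lemma \ref{z-0}.}

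The plan is to show that every point $x\in\Omega$ lies within a bounded quasihyperbolic distance of a geodesic ray issuing from a fixed basepoint, by splitting on the two geometric alternatives provided by the annulus-point/arc-point dichotomy. First I would fix a basepoint $w\in\Omega$ (say a point realizing, up to a factor, the inradius, or simply any point with $d_\Omega(w)$ comparable to $\diam\Omega$) and fix the constant $\lambda$ associated with the dichotomy. Given an arbitrary $x\in\Omega$, I would look at the point $a\in\partial\Omega$ nearest to $x$, so $d(x,a)=d_\Omega(x)=:t$. Since $(\Omega,k)$ is complete, proper and geodesic (by \cite[Proposition 2.8]{BHK} using local quasiconvexity, which follows here from annular quasiconvexity), and since $(\Omega,k)$ is $\delta$-hyperbolic, it suffices to produce, for each $x$, a point $\eta\in\partial_\infty\Omega$ such that the quasihyperbolic geodesic ray $[w,\eta]$ passes within a controlled distance of $x$.

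The main work is the \emph{annulus-point case}: here the round annulus ${\mathbb B}(a,t/\lambda)\setminus\overline{{\mathbb B}}(a,\lambda t)\subset\Omega$. Using $C$-annular quasiconvexity of $X$ at the center $a$, I would build, for each scale $2^{-j}t$, a short curve in $\Omega$ joining a point at distance $\sim 2^{-j}t$ from $a$ to a point at distance $\sim 2^{-j-1}t$ from $a$, staying in $\Omega$, and concatenate these telescoping pieces to obtain a curve $\beta$ from $x$ that ``spirals into'' $a$; its quasihyperbolic length restricted to each dyadic shell is uniformly bounded (the estimate $\ell_k(\beta|_{\text{shell }j})\lesssim_{C}1$ is the standard computation, using $d_\Omega(z)\gtrsim \dist(z,\{a\})$ on the clean annulus), so $\beta$ is a quasihyperbolic quasigeodesic ray landing at a boundary point $\eta\in\partial_\infty\Omega$; by the stability of geodesics in $\delta$-hyperbolic spaces, the actual geodesic ray $[w,\eta]$ stays within $O(C,\delta)$ of $x$. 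For the \emph{arc-point case} one uses that $x$ being an arc point forces $\partial\Omega$ to ``come close on two sides'' near $x$; a short curve running from $x$ roughly toward $a$ and then following $\partial\Omega$ at distance $\sim\lambda t$ connects $x$ in bounded quasihyperbolic distance to a point that is an annulus point (or to $w$ directly if $\Omega$ is small), reducing to the previous case. Throughout, boundedness of $\Omega$ is used to guarantee that finitely many (or a controlled number of) dyadic scales separate any $x$ from $w$, so the basepoint-to-$x$ estimate does not blow up.

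The step I expect to be the main obstacle is verifying uniformly, across \emph{all} $x\in\Omega$ and \emph{all} dyadic scales, that the spiral curve $\beta$ produced from annular quasiconvexity is genuinely a quasihyperbolic quasigeodesic with constants depending only on $C$ and $\delta$ — in particular, controlling $d_\Omega(z)$ from below along $\beta$ when the nearby boundary behaves wildly outside the clean annulus, and checking that the landing point $\eta$ is well-defined and independent of the chainings. A secondary technical point is handling the arc-point case cleanly: showing that a bounded-$k$-length ``detour'' really does carry an arc point to an annulus point, which ultimately rests again on annular quasiconvexity applied at an appropriately chosen center, together with the fact that $0<\lambda\le 1/2$ is fixed. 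Once these length estimates are in place, the roughly starlike conclusion follows by assembling the $w$-to-$x$ bound with the Gromov-hyperbolic stability of geodesic rays, with $K=K(C,\delta)$.
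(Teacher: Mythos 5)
Your skeleton (fix a basepoint $w$ maximizing $d_\Omega$, split into $\lambda$-annulus points and $\lambda$-arc points with $\lambda$ determined by the annular quasiconvexity constant) is the same as the paper's, but both branches have genuine gaps. In the annulus case, the clean annulus ${\mathbb{B}}(a,t/\lambda)\setminus\overline{{\mathbb{B}}}(a,\lambda t)\subset\Omega$ is available only at the single scale $t=d_\Omega(x)$; at the smaller dyadic scales $2^{-j}t$ the curves produced by annular quasiconvexity of $X$ may leave $\Omega$ altogether, and there is no lower bound on $d_\Omega$ along them, so the telescoping spiral is not a quasihyperbolic quasigeodesic with constants depending only on $C$ --- this is precisely the obstacle you flag, and it cannot be repaired from the hypotheses. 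Moreover, even granting that $\beta$ is a quasigeodesic ray from $x$ to some $\eta$, stability only gives that $[x,\eta]$ is Hausdorff-close to $\beta$; to conclude that $[w,\eta]$ passes near $x$ you would need to bound the Gromov product $(w|\eta)_x$, i.e.\ to show that $[w,x]$ followed by $\beta$ does not backtrack, which you never establish (and which is essentially the statement being proved). The paper sidesteps both problems with a one-scale argument: since $d(w,x_0)\ge d_\Omega(w)\ge d_\Omega(x)=t$, the quasihyperbolic geodesic ray from $w$ to $x_0$ already contains a point $z$ with $d(z,x_0)=t$, and a single application of $C$-annular quasiconvexity at $x_0$ joins $x$ to $z$ inside the clean annulus, where $d_\Omega\gtrsim t/C$, giving $k(x,z)\le 4C^2$ outright.

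In the arc-point case your proposed reduction (``a bounded quasihyperbolic detour carries an arc point to an annulus point'') is false in general: in a ball of $\mathbb{R}^3$ minus a diameter segment, points near the middle of the slit remain $\lambda$-arc points under any displacement of boundedly small quasihyperbolic length, since their nearest boundary point stays on the slit and the annulus around it always meets the slit. The paper instead uses the anchor machinery of \cite[Lemma 7.3]{HSX}: a $\lambda$-arc point lies on a continuous $(C_1,C_1)$-quasigeodesic line (an anchor) joining two points $a,b\in\partial_\infty\Omega$; geodesic stability places $x$ within $C_2(C_1,\delta)$ of a genuine quasihyperbolic geodesic line from $a$ to $b$, and thinness of the ideal triangle with vertices $a$, $b$, $w$ places that line within $24\delta$ of $[w,a]\cup[w,b]$. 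That is the idea missing from your arc-point branch; without it (or some substitute), the case of domains in which no point is an annulus point is not covered at all.
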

\begin{proof} Choose a point $w$ such that
$$d_{\Omega}(w)=\max\{d_{\Omega}(x):x\in \Omega\}.$$
We shall find a constant $K$ depending only on $C$ and $\delta$ such that for each $x\in \Omega$ there exists a quasihyperbolic geodesic ray $\alpha$ emanating from $w$ satisfying
$$\dist_k(x,\alpha)\leq K.$$

Let $\lambda=1/(2C)$.
Fix $x\in \Omega$, we consider two cases.

\begin{case} $x$ is a $\lambda$-arc point.\end{case}

 Then by \cite[Lemma 7.3]{HSX}, we see that there exist two points  $a,b\in\partial_\infty \Omega$ and a $C_1$-anchor $\gamma$ connecting $a, b$ with $x\in \gamma$ and $C_1=C_1(\lambda,C)$, where $\partial_\infty \Omega$ is the Gromov boundary of hyperbolic space $(\Omega,k)$. For the definition of anchor see \cite[Definition 7.2]{HSX}. Moreover, by the definition of anchor, we see that  $\gamma$ is a continuous quasihyperbolic $(C_1,C_1)$-quasigeodesic, that is,  $$\ell_k(\gamma[x,y])\leq C_1 k(x,y)+C_1$$ for all $x,y\in \gamma$.  On the other hand, we see from  \cite[Proposition 2.8]{BHK} that $(\Omega,k)$ is a proper geodesic metric space. Thus there is a quasihyperbolic geodesic line $\gamma_1$ connecting $a$ and $b$. It follows from \cite[Lemma 3.4]{HSX} that the quasihyperbolic Hausdorff distance
$$\dist_\mathcal{H}(\gamma, \gamma_1)\leq C_2,$$
where $C_2$ is a constant depending only on $C_1$ and $\delta$. This implies that there is a point $y\in\gamma_1$ with $$k(x,y)\leq C_2$$

Moreover, we may join $w$ to $a$ and $b$ by quasihyperbolic geodesic rays $\alpha_1$ and $\alpha_2$, respectively. Now by \cite[Lemma 3.1]{HSX}, we find that
$$\dist_k(y,\alpha_1\cup\alpha_2)\leq 24\delta,$$
and thus,
$$\dist_k(x,\alpha_1\cup\alpha_2)\leq 24\delta+C_2=K,$$
as desired.

\begin{case} $x$ is a $\lambda$-annular point. \end{case}

Thus there is a point $x_0\in\partial \Omega$ with
$$t=d(x_0,x)=d_{\Omega}(x),$$
the annulus ${\mathbb{B}}(x_0,t/\lambda)\setminus \overline{{\mathbb{B}}}(x_0,\lambda t)$ is contained in $\Omega$. Then choose a quasihyperbolic geodesic ray $\alpha$ emanating from $w$ to $x_0$. Since $d(w,x_0)\geq d_{\Omega}(w)\geq d_{\Omega}(x)=t$, there exist a point $z\in \alpha$ with $d(z,x_0)=t$. Because $X$ is $C$-annular quasiconvex, we see that there is a curve $\beta\subset {\mathbb{B}}(x_0,Ct)\setminus \overline{{\mathbb{B}}}(x_0,t/C)$ connecting $x$ and $z$ with
$$\ell(\beta)\leq Cd(x,z)\leq 2Ct.$$
Note that ${\mathbb{B}}(x_0,t/\lambda)\setminus \overline{{\mathbb{B}}}(x_0,\lambda t)$ is contained in $\Omega$ and $\lambda=1/(2C)$, it follows that $\beta\subset \Omega$ and for each $u\in \beta$, we have $d_{\Omega}(u)\geq t/(2C)$. Therefore, we have
$$k(x,z)\leq \ell_k(\beta)\leq 4C^2=K,$$
as required.
\end{proof}

\emph{Proof of Theorem \ref{thm-2}.} Firstly, it follows from \cite[Theorem 6.1]{BB03} that the Gehring-Hayman condition and the ball separation condition imply the Gromov hyperbolicty. It remains to show the necessity.  If $(\Omega,d)$ is bounded, then the assertion follows from \cite[Theorem 1.2]{KLM}. If $\Omega$ is unbounded, by \cite[Corollary 5.2]{KLM},  it suffices to check the rough starlikeness of $(\Omega, k)$.

Towards this end, take a point $a\in \partial \Omega$. Denote the sphericalization of metric space $(X,d)$ associated to the point $a$ by $(\dot{X},\widehat{d_a})$. Since $(X,d)$ is annular quasiconvex, it follows from \cite[Proposition 6.3 ]{BHX} that $(X,d)$ is quasiconvex. Hence, \cite[Lemma 3.4]{HRWZ} implies $(\Omega,d)$ is locally quasiconvex.  Since $a\in\partial \Omega$ and $(\Omega,d)$ is unbounded, it follows  from \cite[Theorem 4.12]{BHX} that the identity map $(\Omega,k)\to (\Omega,\widehat{k}_a)$ is bilipschitz, where $\widehat{k}_a$ is the quasihyperbolic metric of $\Omega$ with respect to the metric $\widehat{d_a}$. It thus implies that  $(\Omega,\widehat{k}_a)$ is Gromov hyperbolic, since $(\Omega,k)$ is Gromov hyperbolic  and the quasihyperbolic metric is a length metric. Hence, by Lemma \ref{lem-4}, we only need to show that $(\Omega,\widehat{k}_a)$ is roughly starlike.

According to \cite[Theorem 6.5(a)]{BHX}, it follows that the space $(\dot{X},\widehat{d_a})$ is quasiconvex and annularly quasiconvex.  Note that the rough starlikeness and Gromov hyperbolicity are preserved under bilipschitz mappings. So we may assume that $(\dot{X},\widehat{d_a})$ is geodesic, because the quasiconvexity condition implies that the identity map $(\dot{X},\widehat{d_a})\to (\dot{X},\ell_{\widehat{d}_a})$ is bilipschitz, where $\ell_{\widehat{d}_a}$ is the length metric of $(\dot{X},\widehat{d_a})$. Hence,  by Lemma \ref{z-0},
 we get $(\Omega,\widehat{k}_a)$ is roughly starlike, as desired.
\qed

\subsection{} The purpose of this subsection is to prove Theorem \ref{thm-3}. We also need some useful lemmas. The first one shows that bounded $\varphi$-uniformity condition implies the quasihyperbolic growth condition as follows.

\begin{lemma}\label{lem-5}
Let $(\Omega,d)$ be a locally compact, rectifiably connected and incomplete metric space. If $(\Omega,d)$ is bounded and $\varphi$-uniform, then there is an increasing function $\phi:[0,\infty)\to[0,\infty)$ such that for all $x\in \Omega$
$$k(w,x)\leq \phi\Big(\frac{d(w)}{d(x)}\Big),$$
where $w\in \Omega$ satisfies $d(w)=\max_{x\in \Omega}d(x)$ and $k$ is the quasihyperbolic metric of $\Omega$. In particularly, we can take $\phi(t)=\varphi(\frac{\diam \Omega}{d(w)}t)$. Here and hereafter, we use $d(x)$ to denote the distance from $x$ to the boundary of $\Omega$ with respect to the metric $d$.
\end{lemma}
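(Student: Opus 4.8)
The plan is to produce the desired quasihyperbolic growth estimate directly from the definition of $\varphi$-uniformity together with the boundedness hypothesis. Fix a point $w\in\Omega$ realizing $d(w)=\max_{x\in\Omega}d(x)$; such a point exists since $\overline\Omega$ is compact (boundedness plus completeness of the completion) and $d(\cdot)$ is continuous, so the maximum is attained. Now for an arbitrary $x\in\Omega$, the $\varphi$-uniformity condition applied to the pair $w,x$ gives
\[
k(w,x)\le\varphi\!\left(r_\Omega(w,x)\right)=\varphi\!\left(\frac{d(w,x)}{d(w)\wedge d(x)}\right).
\]

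The first key step is to observe that, by the choice of $w$, we have $d(x)\le d(w)$ for every $x\in\Omega$, hence $d(w)\wedge d(x)=d(x)$, so the bound simplifies to $k(w,x)\le\varphi\bigl(d(w,x)/d(x)\bigr)$. The second step is to control the numerator $d(w,x)$ by the diameter: $d(w,x)\le\diam\Omega$, which is finite by hypothesis. Combining these,
\[
k(w,x)\le\varphi\!\left(\frac{\diam\Omega}{d(x)}\right)=\varphi\!\left(\frac{\diam\Omega}{d(w)}\cdot\frac{d(w)}{d(x)}\right),
\]
using that $\varphi$ is a homeomorphism of $[0,\infty)$, in particular increasing. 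Setting $\phi(t):=\varphi\bigl(\tfrac{\diam\Omega}{d(w)}\,t\bigr)$, which is increasing as a composition of an increasing linear map with the increasing function $\varphi$, yields exactly $k(w,x)\le\phi\bigl(d(w)/d(x)\bigr)$, as claimed.

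There is essentially no serious obstacle here; the statement is a bookkeeping consequence of unwinding definitions. The only points requiring a moment of care are: (i) justifying that the maximum defining $w$ is actually attained — this uses that a bounded incomplete space has compact completion and that $x\mapsto d_\Omega(x)=\dist(x,\partial\Omega)$ is $1$-Lipschitz, hence continuous, and extends continuously by zero to $\partial\Omega$, so it attains its maximum on the compact set $\overline\Omega$ (and the maximum is positive, attained in $\Omega$, since $\Omega\neq\emptyset$ is open); and (ii) checking that $\diam\Omega/d(w)\ge 1$ is not needed — the monotonicity of $\varphi$ alone suffices regardless of the size of this constant. Thus the argument is complete once these elementary remarks are recorded.
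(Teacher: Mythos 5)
Your argument is correct and is exactly the computation the paper intends (the paper states the lemma without a written proof, but the explicit choice $\phi(t)=\varphi\bigl(\tfrac{\diam \Omega}{d(w)}t\bigr)$ in the statement encodes precisely your two observations: $d(w)\wedge d(x)=d(x)$ by maximality of $d(w)$, and $d(w,x)\le \diam\Omega$, followed by monotonicity of $\varphi$). One minor caveat: your justification that the maximum is attained is not valid in general --- a bounded (even locally compact) metric space need not have compact completion --- but the lemma's formulation already presupposes the existence of such a $w$, so this side remark does not affect the correctness of the main estimate.
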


Secondly, we verify that the $\varphi$-uniformity condition is preserved under sphericalization.

\begin{lemma}\label{lem-6}
Let $(\Omega,d)$ be a locally compact, $c$-quasiconvex and incomplete metric space with $a\in\partial\Omega$. If $(\Omega,d)$ is unbounded and $\varphi$-uniform, then there exists a homeomorphism $\psi:[0,\infty)\to [0,\infty)$ such that the sphericalized space $(\Omega,\widehat{d}_a)$ associated to $a$ is $\psi$-uniform.
\end{lemma}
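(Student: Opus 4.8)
The plan is to transfer the $\varphi$-uniformity estimate from $(\Omega,d)$ to the sphericalized space $(\Omega,\widehat d_a)$ by carefully comparing the two quasihyperbolic metrics and the two distance-to-boundary functions. First I would record the basic comparison estimates: by (\ref{z-1.1}) we have $\widehat d_a \asymp d_a$ with the explicit factor $4$, and by (\ref{z-1.2}) the length element of $\widehat d_a$ is $\rho_a(z) = [1+d(a,z)]^{-2}$; since $(\Omega,d)$ is $c$-quasiconvex, the identity map $(\Omega,d)\to(\Omega,\ell_d)$ is $c$-bilipschitz, so I may work with $\ell_d$ and hence assume lengths and distances are comparable. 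The key geometric fact I would establish is a two-sided estimate for $\widehat d_a(x) := \dist_{\widehat d_a}(x,\partial(\Omega))$ in terms of $d_\Omega(x)$, $d(a,x)$ and possibly $d(a,x) \wedge \diam$: roughly, because the boundary point $a\in\partial\Omega$ becomes a genuine boundary point and $\infty$ is added, one expects
$$\widehat d_a(x) \asymp \frac{d_\Omega(x)}{[1+d(a,x)]^2} \wedge \frac{1}{1+d(a,x)},$$
where the first term dominates when $x$ is far from $a$ relative to $d_\Omega(x)$ and the second (distance to $\infty$ in the sphericalized metric, essentially $\widehat d_a(x,\infty)$) takes over when $x$ is near $a$. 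This is the analogue of the computation in \cite[Section 4]{BHX} and I would derive it by estimating $\rho_a$-integrals of curves from $x$ to $\partial\Omega$ and to $\infty$, using that $\rho_a$ varies by a bounded factor on balls $B(x, \tfrac12 d_\Omega(x))$.

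Next I would compare the quasihyperbolic distances. Using (\ref{z-1.2}) together with the comparison $\rho_a(z) \asymp \rho_a(x)$ for $z$ on a quasihyperbolic-near-optimal curve, one gets the standard formula $\widehat k_a(x,y) \doteq k(x,y)$ up to a bounded additive term when $x,y$ lie in a ``bounded part'' of $\Omega$ away from $a$ and from $\infty$, and more generally $\widehat k_a$ can be controlled by $k$ plus logarithmic terms involving $1+d(a,x)$; but for the $\varphi$-uniformity conclusion I do not actually need a precise comparison of $\widehat k_a$ with $k$ — it is cleaner to estimate $\widehat k_a(x,y)$ directly. Given $x,y\in\Omega$, let $\gamma$ be a $c$-quasiconvex curve joining them in $(\Omega,d)$ (if one exists on a scale where quasiconvexity applies; otherwise I would chain such curves, or rather use the $\varphi$-uniform arc directly). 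The $\varphi$-uniform hypothesis gives a curve realizing $k(x,y)\le\varphi(r_\Omega(x,y))$; I would push this curve through the inversion and integrate $\rho_a$ along it, splitting into the part where $\rho_a$ is comparable to $\rho_a(x)\wedge\rho_a(y)$ and the part near $a$, to obtain a bound
$$\widehat k_a(x,y) \le \psi\!\left(\widehat r_a(x,y)\right), \qquad \widehat r_a(x,y) = \frac{\widehat d_a(x,y)}{\widehat d_a(x)\wedge \widehat d_a(y)},$$
for a suitable homeomorphism $\psi$ built from $\varphi$ and the universal constants. The construction of $\psi$ is the bookkeeping step: one checks monotonicity and the correct behavior at $0$ and $\infty$ (surjectivity onto $[0,\infty)$), possibly replacing the ad hoc bound by a majorizing homeomorphism.

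The main obstacle I expect is the case analysis near the ``pole'' $a$ and near $\infty$. When both $x$ and $y$ are close to $a$ (so that, in the sphericalized metric, they are close to $\infty$), the quantity $\widehat r_a(x,y)$ behaves quite differently from $r_\Omega(x,y)$, and one must verify that the $\varphi$-uniform arc of $(\Omega,d)$, which in the original metric may wander far from both $x$ and $y$, still gives a competitor whose $\widehat d_a$-length is controlled by $\psi$ of $\widehat r_a(x,y)$; here the $\asymp$-formula for $\widehat d_a(x)$ above — in particular the $\tfrac{1}{1+d(a,x)}$ branch — is essential, and the quasiconvexity of $(\Omega,d)$ is used to guarantee the competitor stays inside $\Omega$ with a controlled distance to the boundary. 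A secondary technical point is that $\varphi$-uniformity as stated does not by itself produce \emph{connecting curves} of controlled length, only the quasihyperbolic estimate; but combining it with $c$-quasiconvexity of $(\Omega,d)$ (which holds by hypothesis and survives to a local quasiconvexity statement as in \cite[Lemma 3.4]{HRWZ}) supplies the curves needed to run the integral estimates. Once these are in place, assembling $\psi$ and verifying it is a homeomorphism of $[0,\infty)$ is routine.
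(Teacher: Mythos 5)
Your overall strategy coincides with the paper's: compare the sphericalized distance-to-boundary $\widehat d_a(\cdot)$ and quasihyperbolic metric $\widehat k_a$ with $d_\Omega(\cdot)$ and $k$, and push the $\varphi$-uniformity estimate through these comparisons. (The paper obtains the bilipschitz equivalence $k\asymp\widehat k_a$ by citing \cite[Theorem 4.12]{BHX} rather than re-deriving it by integrating $\rho_a$, and it only needs the \emph{upper} bound $\widehat d_a(x)\le 2d_\Omega(x)/[1+d(x,a)]^2$. Also note that since $a\in\partial\Omega$ one always has $d_\Omega(x)\le d(x,a)$, so the $\tfrac{1}{1+d(a,x)}$ branch of your $\asymp$-formula is never the smaller one; the case analysis ``near the pole'' that you flag as the main obstacle largely evaporates.)

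The genuine gap is at the decisive step. Integrating $\rho_a$ along a near-optimal quasihyperbolic curve gives, at best, $\widehat k_a(x,y)\lesssim k(x,y)\le\varphi(r_\Omega(x,y))$ --- a bound in terms of the \emph{original} ratio $r_\Omega(x,y)=d(x,y)/(d_\Omega(x)\wedge d_\Omega(y))$. Nothing in your ``splitting of the integral'' converts this into a function of the \emph{sphericalized} ratio $\widehat r_a(x,y)$; that conversion is the heart of the lemma and you never state it, let alone prove it. Concretely one must show $r_\Omega(x,y)\le h(\widehat r_a(x,y))$ for a homeomorphism $h$. From \eqref{z-1.1} one has $\widehat d_a(x,y)\gtrsim d(x,y)/([1+d(x,a)][1+d(y,a)])$, while $\widehat d_a(x)\wedge\widehat d_a(y)\lesssim\min\{d_\Omega(x)/[1+d(x,a)]^2,\,d_\Omega(y)/[1+d(y,a)]^2\}$; in the quotient the resulting maximum can only be bounded below by a geometric mean, which yields $\widehat r_a(x,y)\gtrsim d(x,y)/\sqrt{d_\Omega(x)d_\Omega(y)}\gtrsim\sqrt{r_\Omega(x,y)}$ and hence $h(t)\asymp t^2$. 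This quadratic loss is exactly why Theorem \ref{thm-3}(2) assumes $\int_1^\infty dt/\sqrt{\varphi^{-1}(t)}<\infty$ rather than $\int_1^\infty dt/\varphi^{-1}(t)<\infty$; dismissing the construction of $\psi$ as ``bookkeeping'' misses the one place where the shape of $\psi$ actually matters downstream. A second, smaller omission: a bound of the form $C\varphi(256(1+t)^2-1)$ does not tend to $0$ as $t\to 0$, so it cannot by itself define a homeomorphism of $[0,\infty)$; the regime $\widehat d_a(x,y)\lesssim\widehat d_a(x)$ must be handled separately, which the paper does via the local $(\lambda,c_0)$-quasiconvexity of $(\Omega,\widehat d_a)$ (from \cite[Proposition 4.3]{BHX}) to get the linear bound $\widehat k_a(x,y)\lesssim\widehat r_a(x,y)$ there. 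You invoke local quasiconvexity only to produce competitor curves, so this case split is also missing from your plan.
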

\begin{proof}  Since $(\Omega,d)$ is $c$-quasiconvex,  we  observe from \cite[Proposition 4.3]{BHX} that there are constants $\lambda\in(0,1/2)$ and $c_0\geq 1$ depending only on $c$ such that  the sphericalization  $(\Omega,\widehat{d}_a)$ is locally $(\lambda,c_0)$-quasiconvex. Moreover, we see from \cite[Theorem 4.12]{BHX} that the identity map $(\Omega,k)\to (\Omega,\widehat{k}_a)$ is $80c$-bilipschitz, where $\widehat{k}_a$ is the quasihyperbolic metric of $(\Omega,\widehat{d}_a)$.

To prove this lemma, we only need to find a homeomorphism $\psi:[0,\infty)\to [0,\infty)$ such that \be\label{l-new} \widehat{k}_a(x,y) \leq \psi\left(\frac{\widehat{d}_a(x,y)}{\widehat{d}_a(x)\wedge \widehat{d}_a(y)}\right)\ee for all $x,y\in\Omega$, where $\widehat{d}_a(x)$ denotes the distance from $x$ to the boundary of $\Omega$ with respect to the metric $\widehat{d}_a$. To this end,  we divide the proof into two cases.

\begin{case}$\widehat{d}_a(x,y)\leq \frac{\lambda}{3c_0}\widehat{d}_a(x)$.\end{case}

 A similar argument as in \cite[Lemma 3.8]{HRWZ} shows that
\be\label{z-2} \widehat{k}_a(x,y) \leq 3c_0\frac{\widehat{d}_a(x,y)}{\widehat{d}_a(x)}\leq 3c_0\frac{\widehat{d}_a(x,y)}{\widehat{d}_a(x)\wedge \widehat{d}_a(y)},\ee
as desired.

\begin{case}$\widehat{d}_a(x,y)> \frac{\lambda}{3c_0}\widehat{d}_a(x)$. \end{case}

Then we claim  that for all $x\in \Omega$,
\be\label{z-1} \widehat{d}_a(x)\leq \frac{2d(x)}{[1+d(x,a)]^2}.\ee

This can be seen as follows. Take a point $x_0\in\partial \Omega$ with $d(x)=d(x,x_0)$. We consider two possibilities. If $d(x_0,a)\leq \frac{1}{2}d(x,a)-\frac{1}{2}$, then we have
$$d(x)\geq d(x,a)-d(x_0,a)\geq \frac{1}{2}(1+d(x,a)),$$
and therefore,
$$\widehat{d}_a(x)\leq \widehat{d}_a(x,\infty)\leq \frac{1}{1+d(x,a)}\leq \frac{2d(x)}{[1+d(x,a)]^2}.$$
On the other hand, if $d(x_0,a)> \frac{1}{2}d(x,a)-\frac{1}{2}$, thus we have by (\ref{z-1.1}) that
\beq\nonumber
\widehat{d}_a(x)&\leq& \widehat{d}_a(x,x_0)
\\ \nonumber&\leq& \frac{d(x,x_0)}{[1+d(x,a)][1+d(x_0,a)]}
\\ \nonumber&\leq& \frac{2d(x)}{[1+d(x,a)]^2},
\eeq
as needed. This proves (\ref{z-1}).

Now by (\ref{z-1.1}) and (\ref{z-1}), we have for all $x,y\in \Omega$,
\beq\nonumber
\widehat{j}_a(x,y) &=& \log\Big(1+\frac{\widehat{d}_a(x,y)}{\widehat{d}_a(x)\wedge \widehat{d}_a(y)}\Big)
\\ \nonumber&\geq& \frac{1}{2}\log\Big(1+\frac{\widehat{d}_a(x,y)}{\widehat{d}_a(x)}\Big)\Big(1+\frac{\widehat{d}_a(x,y)}{\widehat{d}_a(y)}\Big)
\\ \nonumber&\geq& \frac{1}{2}\log \Big(1+\frac{d(x,y)(1+d(x,a))^2}{8d(x)}\Big)\Big(1+\frac{d(x,y)(1+d(y,a))^2}{8d(y)}\Big)
\\ \nonumber&>& \log\Big(1+\frac{d(x,y)}{8\sqrt{d(x)d(y)}}\Big)
\\ \nonumber&>& \log\frac{\sqrt{d(x)d(y)}+2d(x,y)}{\sqrt{d(x)d(y)}}-4\log2
\\ \nonumber&\geq& \frac{1}{2}\log\Big(1+\frac{d(x,y)}{d(x)}\Big)\Big(1+\frac{d(x,y)}{d(y)}\Big)-4\log 2
\\ \nonumber&\geq& \frac{1}{2}j(x,y)-4\log 2.
\eeq
Because the identity map $(\Omega,k)\to (\Omega,\widehat{k}_a)$ is $80c$-bilipschitz and $(\Omega,d)$ is $\varphi$-uniform, the above inequality implies that
\beq\label{z-3} \widehat{k}_a(x,y)&\leq& 80c k(x,y)
\\ \nonumber &\leq& 80c\varphi(e^{j(x,y)}-1)
\\ \nonumber &\leq&  80c\varphi\Big[256\Big(1+\frac{\widehat{d}_a(x,y)}{\widehat{d}_a(x)\wedge \widehat{d}_a(y)}\Big)^2-1\Big].
\eeq

Set $\psi(t)=3c_0t$ whenever $0\leq t\leq \lambda/3c_0$, and $\psi(t)=80cc_0\varphi(256(1+t)^2-1)$ whenever $t\geq \lambda/3c_0$. Therefore, by (\ref{z-2}) and (\ref{z-3}), we obtain \eqref{l-new}.
\end{proof}

\begin{remark} In \cite{LVZ19}, Li, Vuorinen and Zhou proved that quasim\"obius mappings preserve $\varphi$-uniform domains of $\mathbb{R}^n$. In the proof of Lemma \ref{lem-6}, we calculate the control function for our needs.
\end{remark}

\emph{Proof of Theorem \ref{thm-3}.} $(1)$ We assume first that $(\Omega,d)$ is bounded. Since $(\Omega,d,\mu)$ is Ahlfors $Q$-regular and $(\Omega,k)$ is roughly starlike and Gromov hyperbolic, we see from \cite[Theorem $5.1$]{KLM} that $(\Omega,d)$ satisfies the Gehring-Hayman condition. Because $(\Omega,d)$ is $\varphi$-uniform, it follows from Lemma \ref{lem-5} that $(\Omega,d)$ satisfies the quasihyperbolic growth condition with $\phi(t)=\varphi(\frac{\diam \Omega}{d(w)}t)$. That is,  for all $x\in \Omega$ we have
$$k(w,x)\leq \phi\Big(\frac{d(w)}{d(x)}\Big),$$
where $w\in \Omega$ satisfies $d(w)=\max_{x\in \Omega}d(x)$ and $k$ is the quasihyperbolic metric of $\Omega$. Note that the conditions
$$\int_1^\infty \frac{dt}{{\varphi^{-1}(t)}}<\infty\;\;\;\;\mbox{and}\;\;\;\;\int_1^\infty \frac{dt}{{\phi^{-1}(t)}}<\infty$$
are mutually equivalent.  Therefore, according to \cite[Theorem $1.1$]{La}, we immediately see that the Gromov boundary and the metric boundary of $\Omega$ are homeomorphic.

$(2)$ Assume that $(\Omega,d)$ is unbounded. Let $a\in \partial\Omega$. Denote by $(\Omega,\widehat{d}_a,\mu_a)$ the sphericalization of $(\Omega,d,\mu)$ associated to the point $a$. Let $\ell_{\widehat{d}_a}$ be the length metric of $\Omega$ with respect to the metric $\widehat{d}_a$. In order to show that there is a homeomorphism identification $\partial \Omega\cup \{\infty\}\to \partial_\infty \Omega,$ we need some preparations.

Firstly, we show that
\begin{claim}\label{z-4} the  identity map $(\partial \Omega\cup \{\infty\},d)\to (\partial \Omega\cup \{\infty\}, \ell_{\widehat{d}_a})$ is a homeomorphism.\end{claim}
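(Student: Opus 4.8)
\emph{Proof plan.} The plan is to reduce the statement to a single continuity assertion and then feed it the quasihyperbolic input assembled in Lemmas~\ref{lem-4}--\ref{lem-6}. First I would note that the source $(\partial\Omega\cup\{\infty\},d)$ is compact: by \cite{LS} the sphericalized triple $(\Omega,\widehat{d}_a,\mu_a)$ is again Ahlfors $Q$-regular, so, being bounded, it is totally bounded and its completion is compact; moreover this completion coincides with $\overline\Omega\cup\{\infty\}$ carrying the one-point compactification topology (a standard feature of sphericalization; cf. $(\ref{z-1.1})$ and the definition of $d_a$), whence $\partial\Omega\cup\{\infty\}$ is a closed, hence compact, subspace. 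The target $(\partial\Omega\cup\{\infty\},\ell_{\widehat{d}_a})$ is metric, hence Hausdorff, and a continuous bijection from a compact space onto a Hausdorff space is automatically a homeomorphism. So it suffices to prove: (i) the $\ell_{\widehat{d}_a}$-completion of $\Omega$ has boundary exactly $\partial\Omega\cup\{\infty\}$ (so that the identity of $\Omega$ extends to a bijection of the two boundaries, $\ell_{\widehat{d}_a}$ being finite on $\partial\Omega\cup\{\infty\}$); and (ii) this extension is continuous from the $d$-topology into the $\ell_{\widehat{d}_a}$-metric, i.e. $\widehat{d}_a(\xi_n,\xi)\to0$ implies $\ell_{\widehat{d}_a}(\xi_n,\xi)\to0$ (the opposite implication is free from $\widehat{d}_a\le\ell_{\widehat{d}_a}$, and is not needed).

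The ``finite part'' of (i)--(ii) I would settle by quasiconvexity alone. If $\xi,\eta\in\partial\Omega$ stay at bounded $d$-distance from the base point $a$, then $\widehat{d}_a\asymp d$ there; joining interior points $y_n\to\xi$, $z_n\to\eta$ by $c$-quasiconvex curves $\beta_n$ in $\Omega$ (recall $(\Omega,d)$ is $c$-quasiconvex) and using $\rho_a\le1$ gives $\ell_{\widehat{d}_a}(\beta_n)=\int_{\beta_n}\rho_a\,|dz|\le\ell_d(\beta_n)\le c\,d(y_n,z_n)$; passing to the limit, $\ell_{\widehat{d}_a}(\xi,\eta)\le c'\,\widehat{d}_a(\xi,\eta)$, which yields finiteness and continuity at such boundary points.

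The substantive case is that of a sequence escaping to $\infty$, for which I would pass to the quasihyperbolic metric $\widehat{k}_a$ of $(\Omega,\widehat{d}_a)$. By Lemma~\ref{lem-6}, $(\Omega,\widehat{d}_a)$ is $\psi$-uniform for an explicit $\psi$ built from $\varphi$, the identity $(\Omega,k)\to(\Omega,\widehat{k}_a)$ is $80c$-bilipschitz, and $(\Omega,\widehat{d}_a)$ is locally quasiconvex; hence $(\Omega,\widehat{k}_a)$ is proper and geodesic (\cite[Proposition~2.8]{BHK}), Gromov hyperbolic, and, by Lemma~\ref{lem-4}, roughly starlike with respect to a point $w'$ chosen with $\widehat{d}_a(w')=\max_\Omega\widehat{d}_a$. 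Feeding the explicit $\psi$ into Lemma~\ref{lem-5} and changing variables turns the hypothesis $\int_1^\infty dt/\sqrt{\varphi^{-1}(t)}<\infty$ into the growth estimate $\widehat{k}_a(w',x)\le\phi(\widehat{d}_a(w')/\widehat{d}_a(x))$ with $\int_1^\infty dt/\phi^{-1}(t)<\infty$, so that along any $\widehat{k}_a$-geodesic $\gamma$ from $w'$ one has $\widehat{d}_a(\gamma(t))\le\widehat{d}_a(w')/\phi^{-1}(t)$ and the tail $\gamma|_{[t,\infty)}$ has $\ell_{\widehat{d}_a}$-length at most $\tau(t):=\widehat{d}_a(w')\int_t^\infty ds/\phi^{-1}(s)\to0$. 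Using rough starlikeness and properness, I would attach to each $\xi\in\partial\Omega\cup\{\infty\}$ a $\widehat{k}_a$-geodesic ray $\gamma^\xi$ from $w'$ accumulating at $\xi$ in $\widehat{d}_a$; the tail estimate shows it has finite $\ell_{\widehat{d}_a}$-length and converges to $\xi$ in $\ell_{\widehat{d}_a}$ with $\ell_{\widehat{d}_a}(\gamma^\xi(t),\xi)\le\tau(t)$, and, using $\delta$-hyperbolicity together with the Gehring--Hayman inequality for $(\Omega,\widehat{d}_a)$ (valid by \cite[Theorem~5.1]{KLM}, as $(\Omega,\widehat{d}_a,\mu_a)$ is Ahlfors $Q$-regular and $(\Omega,\widehat{k}_a)$ is roughly starlike Gromov hyperbolic), two such rays with $\widehat{d}_a$-close endpoints are $\ell_{\widehat{d}_a}$-asymptotic. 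This delivers both the bijection in (i) and the continuity in (ii) near $\infty$.

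The step I expect to be the main obstacle is precisely this last one: extracting from the integrability $\int_1^\infty dt/\sqrt{\varphi^{-1}(t)}<\infty$ — equivalently, from the quasihyperbolic growth condition for the sphericalized domain — enough control to rule out ``pinching at infinity'', i.e. to show that $\widehat{d}_a$-close boundary points near $\infty$ are $\ell_{\widehat{d}_a}$-close. Without such a quantitative input the inner and outer completions of $\Omega$ may genuinely differ, so the interplay between the decay of $\tau$, the fellow-travelling of quasihyperbolic rays, and the Gehring--Hayman bound has to be arranged so that the final estimate on $\ell_{\widehat{d}_a}(\xi_n,\xi)$ is uniform in $n$.
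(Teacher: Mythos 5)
Your route is genuinely different from the paper's and far heavier than what the claim requires. The paper disposes of Claim \ref{z-4} with two displayed inequalities and a Cauchy-sequence correspondence: since $\rho_a\le 1$ and $(\Omega,d)$ is $c$-quasiconvex, one has $\ell_{\widehat{d}_a}(x,y)\le\ell_d(x,y)\le c\,d(x,y)$; conversely, by (\ref{z-1.1}), $d(x,y)\le 4[1+d(x,a)][1+d(y,a)]\,\widehat{d}_a(x,y)\le 4[1+d(x,a)][1+d(y,a)]\,\ell_{\widehat{d}_a}(x,y)$. Hence a sequence in $\Omega$ is $d$-Cauchy if and only if it is $\ell_{\widehat{d}_a}$-Cauchy (the factors $1+d(\cdot,a)$ being bounded along $d$-bounded sequences), while $d(x_n,a)\to\infty$ corresponds to convergence to the point $\infty$; this identifies the two completions together with their topologies. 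The ``finite part'' of your argument, via quasiconvexity and $\rho_a\le1$, is exactly the paper's first inequality, so that portion matches.

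Where you diverge is at the point $\infty$, and there your proposal is not yet a proof: you bring in the quasihyperbolic growth condition derived from $\varphi$-uniformity and $\int_1^\infty dt/\sqrt{\varphi^{-1}(t)}<\infty$, rough starlikeness, and the Gehring--Hayman inequality in order to show that $\widehat{d}_a$-close boundary points near $\infty$ are $\ell_{\widehat{d}_a}$-close, and you yourself flag the decisive estimate as something that ``has to be arranged.'' Two remarks. First, the machinery you invoke is precisely what the paper assembles \emph{afterwards}, in Claims \ref{z-5} and \ref{z-6}, to identify $(\partial\Omega\cup\{\infty\},\ell_{\widehat{d}_a})$ with the Gromov boundary via \cite[Theorem 1.1]{La}; folding a re-derivation of Lammi's theorem into Claim \ref{z-4} both overshoots the statement and duplicates the subsequent steps of the paper's proof. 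Second, your instinct that the point at infinity needs care is defensible: one must know that all sequences with $d(x_n,a)\to\infty$ are mutually $\ell_{\widehat{d}_a}$-equivalent, i.e.\ that $\infty$ does not split in the inner completion, and the paper handles this with a single sentence. But if you take this route you must actually produce the uniform bound on $\ell_{\widehat{d}_a}(\xi_n,\xi)$ (the fellow-travelling of the rays $\gamma^{\xi_n}$ and $\gamma^{\xi}$ quantified in terms of $\widehat{d}_a(\xi_n,\xi)$, combined with the tail estimate $\tau(t)\to0$) rather than announce it; as written, the proposal stops exactly at the step that carries all the content.
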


Indeed, by the definition of $\ell_{\widehat{d}_a}$, we see that $d(x_n,a)\to \infty$ if and only if $\ell_{\widehat{d}_a}(x_n,a)\to 0$, as $n\to \infty.$ Thus it suffices to verify that for any sequence $\{x_n\}\subset \Omega$, $\{x_n\}$ is Cauchy in the metric $d$ if and only if $\{x_n\}$ is Cauchy in the metric $\ell_{\widehat{d}_a}$.

On one hand, by \cite[(2.11)]{LS} and the quasiconvexity of $(\Omega,d)$, it follows that
$$\ell_{\widehat{d}_a}(x_n,x_m)\leq \ell_d(x_n,x_m)\leq cd(x_n,x_m),$$
where $\ell_d$ is the length metric of $(\Omega,d)$.

On the other hand, we see from (\ref{z-1.1}) that
$$d(x_n,x_m) \leq 4[1+d(x_n,a)][1+d(x_m,a)] \ell_{\widehat{d}_a}(x_n,x_m),$$
as required. We have proved Claim \ref{z-4}.

Secondly, we check that
\begin{claim}\label{z-5}  $(\Omega,\widehat{k}_a)$ is  roughly starlike Gromov hyperbolic and $(\Omega,\ell_{\widehat{d}_a})$ satisfies the Gehring-Hayman condition, where $\widehat{k}_a$ is the quasihyperbolic metric of $\Omega$ with respect to the metric $\widehat{d}_a$.
\end{claim}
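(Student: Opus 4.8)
The plan is to deduce both halves of the claim from properties of the sphericalized space $(\Omega,\widehat{d}_a,\mu_a)$, transporting the hypotheses of Theorem \ref{thm-3} along the bilipschitz identity map between $(\Omega,k)$ and $(\Omega,\widehat{k}_a)$ and then invoking \cite[Theorem 5.1]{KLM} on $(\Omega,\widehat{d}_a,\mu_a)$. First I would record the structural facts already used in the proof of Lemma \ref{lem-6}: since $(\Omega,d)$ is unbounded and $c$-quasiconvex, the space $(\Omega,\widehat{d}_a)$ is bounded and, by \cite[Proposition 4.3]{BHX}, locally $(\lambda,c_0)$-quasiconvex; hence by \cite[Proposition 2.8]{BHK} both $(\Omega,k)$ and $(\Omega,\widehat{k}_a)$ are complete, proper and geodesic; and by \cite[Theorem 4.12]{BHX} the identity map $\iota\colon(\Omega,k)\to(\Omega,\widehat{k}_a)$ is $80c$-bilipschitz.

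Next I would show that $(\Omega,\widehat{k}_a)$ is roughly starlike and Gromov hyperbolic. Gromov hyperbolicity of $(\Omega,\widehat{k}_a)$ follows, as in the proof of Theorem \ref{thm-2}, from that of $(\Omega,k)$ because $\iota$ is a bilipschitz map between geodesic (length) spaces. For rough starlikeness, observe that $\iota$ is a rough $(80c,0)$-quasi-isometry between the proper geodesic Gromov hyperbolic spaces $(\Omega,k)$ and $(\Omega,\widehat{k}_a)$; since $(\Omega,k)$ is $K$-roughly starlike with respect to some $w\in\Omega$ by the hypothesis of Theorem \ref{thm-3}, Lemma \ref{lem-4} gives that $(\Omega,\widehat{k}_a)$ is $K'$-roughly starlike with respect to $\iota(w)=w$. (Here the roughly starlike hypothesis on $(\Omega,k)$ is assumed, so --- unlike in the proof of Theorem \ref{thm-2}, where it had to be produced via Lemma \ref{z-0} --- it may simply be transported.)

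To obtain the Gehring--Hayman condition, I would first use the preservation theorem of \cite{LS}: sphericalization of the unbounded Ahlfors $Q$-regular space $(\Omega,d,\mu)$ is again Ahlfors $Q$-regular, so $(\Omega,\widehat{d}_a,\mu_a)$ is a bounded, Ahlfors $Q$-regular, incomplete metric measure space whose quasihyperbolic space $(\Omega,\widehat{k}_a)$ is, by the previous paragraph, roughly starlike and Gromov hyperbolic. These are exactly the hypotheses of \cite[Theorem 5.1]{KLM} with $(\widehat{d}_a,\mu_a)$ in the role of $(d,\mu)$, and its conclusion is that the associated length space $(\Omega,\ell_{\widehat{d}_a})$ satisfies the Gehring--Hayman condition. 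This would complete the proof of Claim \ref{z-5}.

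The step I expect to be the main obstacle is the last one: checking that \cite{LS} and \cite[Theorem 5.1]{KLM} apply verbatim to $(\Omega,\widehat{d}_a,\mu_a)$, given that the sphericalization is based at the boundary point $a\in\partial\Omega$ rather than an interior point and that $(\Omega,\widehat{d}_a)$ is only locally, not globally, quasiconvex. This rests on the local $(\lambda,c_0)$-quasiconvexity from \cite[Proposition 4.3]{BHX} together with the Ahlfors $Q$-regularity of the spherical measure $\mu_a$; the other steps are routine transfers along $\iota$ using Lemma \ref{lem-4} and the stability of quasigeodesics in Gromov hyperbolic spaces.
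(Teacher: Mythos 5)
Your proposal follows essentially the same route as the paper's own proof: transfer Gromov hyperbolicity and rough starlikeness of $(\Omega,\widehat{k}_a)$ along the $80c$-bilipschitz identity map from \cite[Theorem 4.12]{BHX} (using Lemma \ref{lem-4} for the starlikeness), then invoke \cite[Proposition 3.1]{LS} for Ahlfors $Q$-regularity of the sphericalized space and \cite[Theorem 5.1]{KLM} for the Gehring--Hayman condition. The argument is correct as written, and your closing caveat about verifying the hypotheses of \cite{LS} and \cite{KLM} for the sphericalization based at $a\in\partial\Omega$ is a reasonable point of care that the paper itself passes over silently.
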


By \cite[Theorem $4.12$]{BHX}, we see that the identity map $(\Omega,k)\to (\Omega,\widehat{k}_a)$ is $80c$-bilipschitz. Since  $(\Omega, k)$ is Gromov hyperbolic, it follows from \cite[Page 402, Theorem 1.9]{BrHa}  that $(\Omega, \widehat{k}_a)$ is Gromov hyperbolic as well. Since $(\Omega,k)$ is roughly starlike, we see from  Lemma \ref{lem-4}  that $(\Omega,\widehat{k}_a)$ is also roughly starlike. Furthermore, according to \cite[Proposition 3.1]{LS}, we immediately find that the sphericalization $(\Omega,\widehat{d}_a,\mu_a)$ is Ahlfors $Q$-regular. Thus we obtain from \cite[Theorem 5.1]{KLM} that $(\Omega,\ell_{\widehat{d}_a})$ satisfies the Gehring-Hayman condition, which shows Claim \ref{z-5}.

Next, we claim that
\begin{claim}\label{z-6}
there is a natural homeomorphism identification
$$(\partial \Omega\cup \{\infty\}, \ell_{\widehat{d}_a})\to (\partial_\infty \Omega_a,\widehat{\rho}),$$
where $\partial_\infty \Omega_a$ is the boundary at infinity of Gromov hyperbolic space $(\Omega,\widehat{k}_a)$ and $\widehat{\rho}$ is an arbitrary Bourdon metric on $\partial_\infty \Omega_a$.
\end{claim}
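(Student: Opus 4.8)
The plan is to view Claim \ref{z-6} as the bounded case of the theory applied to the sphericalized space $(\Omega,\widehat{d}_a)$ in place of $(\Omega,d)$; concretely, one reruns the chain of results that proves part (1) of Theorem \ref{thm-3}, after recording that the sphericalization inherits all the required structure and, crucially, that the integral hypothesis $\int_1^\infty dt/\sqrt{\varphi^{-1}(t)}<\infty$ of the unbounded case turns into the integral hypothesis $\int_1^\infty dt/\phi^{-1}(t)<\infty$ of the bounded case under the transformation.

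First I would collect the structural facts about $(\Omega,\widehat{d}_a)$. As $(\Omega,d)$ is unbounded, $(\ref{z-1.1})$ gives $\widehat{d}_a\le d_a\le1$, so $(\Omega,\widehat{d}_a)$ is bounded; it is locally compact and rectifiably connected because these properties pass from $(\Omega,d)$ through $(\ref{z-1.1})$ and $(\ref{z-1.2})$ using the $c$-quasiconvexity of $(\Omega,d)$; and it is incomplete since, arguing as in Claim \ref{z-4}, the metric completion of $(\Omega,\widehat{d}_a)$ inside $\dot{X}$ is $\overline{\Omega}\cup\{\infty\}$, so its metric boundary is $\partial\Omega\cup\{\infty\}\ne\emptyset$. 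By Lemma \ref{lem-6} the space $(\Omega,\widehat{d}_a)$ is $\psi$-uniform with $\psi(t)=3c_0t$ on $[0,\lambda/3c_0]$ and $\psi(t)=80cc_0\,\varphi(256(1+t)^2-1)$ on $[\lambda/3c_0,\infty)$; by \cite[Proposition 3.1]{LS} the space $(\Omega,\widehat{d}_a,\mu_a)$ is Ahlfors $Q$-regular; and by Claim \ref{z-5} the space $(\Omega,\widehat{k}_a)$ is roughly starlike Gromov hyperbolic while $(\Omega,\ell_{\widehat{d}_a})$ satisfies the Gehring--Hayman condition.

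Next I would apply Lemma \ref{lem-5} to the bounded $\psi$-uniform space $(\Omega,\widehat{d}_a)$: choosing $\widehat{w}\in\Omega$ with $\widehat{d}_a(\widehat{w})=\max_{x\in\Omega}\widehat{d}_a(x)$, we get the quasihyperbolic growth condition $\widehat{k}_a(\widehat{w},x)\le\phi\big(\widehat{d}_a(\widehat{w})/\widehat{d}_a(x)\big)$ for all $x\in\Omega$, where $\phi(t)=\psi\big(\tfrac{\diam_{\widehat{d}_a}\Omega}{\widehat{d}_a(\widehat{w})}\,t\big)$. The point that needs genuine care is $\int_1^\infty dt/\phi^{-1}(t)<\infty$. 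Since $\phi$ is $\psi$ precomposed with a linear rescaling of the argument, $\phi^{-1}$ is a fixed multiple of $\psi^{-1}$, so it suffices to control $\psi^{-1}$; inverting the second branch of $\psi$ yields, for all large $s$,
\[
\psi^{-1}(s)=\tfrac{1}{16}\sqrt{\varphi^{-1}\!\big(s/(80cc_0)\big)+1}-1 ,
\]
which is comparable, as $s\to\infty$, to $\sqrt{\varphi^{-1}(s/(80cc_0))}$. Hence, after the substitution $s\mapsto s/(80cc_0)$, convergence of $\int_1^\infty dt/\phi^{-1}(t)$ is equivalent to convergence of $\int_1^\infty dt/\sqrt{\varphi^{-1}(t)}$, which holds by the hypothesis of Theorem \ref{thm-3}(2).

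Finally, having checked that $(\Omega,\widehat{d}_a,\mu_a)$ is Ahlfors $Q$-regular, that $(\Omega,\widehat{k}_a)$ is roughly starlike Gromov hyperbolic, that $(\Omega,\ell_{\widehat{d}_a})$ satisfies the Gehring--Hayman condition, and that the quasihyperbolic growth condition holds with $\int_1^\infty dt/\phi^{-1}(t)<\infty$, I would invoke \cite[Theorem 1.1]{La} exactly as in the proof of part (1) of Theorem \ref{thm-3}. This produces a natural homeomorphism from the inner metric boundary of $(\Omega,\widehat{d}_a)$ onto the Gromov boundary $\partial_\infty\Omega_a$ of $(\Omega,\widehat{k}_a)$, the latter carrying its cone topology, which is the topology of any Bourdon metric $\widehat{\rho}$. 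Combining this with Claim \ref{z-4}, which identifies that inner metric boundary with $(\partial\Omega\cup\{\infty\},\ell_{\widehat{d}_a})$, yields the asserted identification $(\partial\Omega\cup\{\infty\},\ell_{\widehat{d}_a})\to(\partial_\infty\Omega_a,\widehat{\rho})$. I expect the integral computation above to be the main obstacle: one must pin down the constants inside $\varphi^{-1}$ and confirm that the square root appearing there is precisely what converts the bounded-case condition $\int_1^\infty dt/\varphi^{-1}(t)<\infty$ into the unbounded-case condition $\int_1^\infty dt/\sqrt{\varphi^{-1}(t)}<\infty$; the rest is a reassembly of Lemmas \ref{lem-4}, \ref{lem-5} and \ref{lem-6}, Claims \ref{z-4} and \ref{z-5}, and \cite[Theorem 1.1]{La}.
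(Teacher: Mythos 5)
Your proposal follows essentially the same route as the paper: apply Lemma \ref{lem-6} to get $\psi$-uniformity of the sphericalized space, feed this into Lemma \ref{lem-5} to obtain the quasihyperbolic growth function $\phi$, check that $\int_1^\infty dt/\phi^{-1}(t)<\infty$ is equivalent to $\int_1^\infty dt/\sqrt{\varphi^{-1}(t)}<\infty$, and then invoke Claim \ref{z-5} together with \cite[Theorem 1.1]{La}. The only difference is that you write out the explicit inversion $\psi^{-1}(s)=\tfrac{1}{16}\sqrt{\varphi^{-1}(s/(80cc_0))+1}-1$, which the paper leaves as ``it follows from the definitions of $\psi$ and $\phi$''; your computation is correct and confirms that step.
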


Since $(\Omega,d)$ is $\varphi$-uniform, we see from Lemma \ref{lem-6} and its proof that both the spaces $(\Omega,\widehat{d}_a)$ and $(\Omega,\ell_{\widehat{d}_a})$ are $\psi$-uniform with $\psi(t)=80c\varphi(256(1+t)^2-1)$ for all $t\geq 1$. Thus it follows from Lemma \ref{lem-5} that there is an increasing function $\phi:[0,\infty)\to [0,\infty)$ and $w\in \Omega$ such that
$$\widehat{k}_a(w,x)\leq \phi\Big(\frac{\widehat{d}_a(w)}{\widehat{d}_a(x)}\Big)$$
with $\phi(t)=\psi(\frac{t}{\widehat{d}_a(w)})$, since $\diam_{\widehat{d}_a}(\Omega)\leq 1$ by  \eqref{z-1.1}. Moreover, it follows from the definitions of $\psi$ and $\phi$ that the conditions
$$\int_1^\infty \frac{dt}{\sqrt{\varphi^{-1}(t)}}<\infty\;\;\;\;\mbox{and}\;\;\;\;\int_1^\infty \frac{dt}{\phi^{-1}(t)}<\infty$$
are mutually equivalent. Now we observe that the conditions \cite[(1.3) and (1.5)]{La} with respect to the metric $\widehat{k}_a$ are verified. Therefore, it follows from Claim \ref{z-5} and \cite[Theorem 1.1]{La}  that there is a natural homeomorphism identification $(\partial \Omega\cup \{\infty\}, \ell_{\widehat{d}_a})\to (\partial_\infty \Omega_a,\widehat{\rho})$, which proves Claim \ref{z-6}.

Finally, since the identity map $(\Omega,k)\to (\Omega,\widehat{k}_a)$ is $80c$-bilipschitz, we see from \cite[Propositions 6.3 and 6.5]{BS} that there is a natural homeomorphism identification
$$(\partial_\infty \Omega,\rho) \to (\partial_\infty \Omega_a,\widehat{\rho}),$$
where $\partial_\infty \Omega$ is the Gromov boundary of hyperbolic space $(\Omega,k)$ and $\rho$ is a Bourdon metric defined on $\partial_\infty \Omega$.

Therefore, this together with Claims \ref{z-4} and \ref{z-6} shows that there is a natural homeomorphism identification $\partial \Omega\cup \{\infty\}\to \partial_\infty \Omega.$ Hence Theorem \ref{thm-3} is proved.
\qed


\end{document}